\theoremstyle{plain}
\newtheorem{theorem}{Theorem}[section]
\newtheorem{corollary}[theorem]{Corollary}
\newtheorem{proposition}[theorem]{Proposition}
\newtheorem{lemma}[theorem]{Lemma}
\theoremstyle{definition}
\newtheorem{remark}[theorem]{Remark}
\newtheorem{example}[theorem]{Example}
\newtheorem{definition}[theorem]{Definition}
\theoremstyle{plain}
\newtheorem{question}{Question}
\newcommand{\bC}{{\mathbb{C}}}
\newcommand{\bF}{{\mathbb{F}}}
\newcommand{\bN}{{\mathbb{N}}}
\newcommand{\bR}{{\mathbb{R}}}
\newcommand{\bT}{{\mathbb{T}}}
\newcommand{\bZ}{{\mathbb{Z}}}
\newcommand{\GG}{G^{(0)}}
\newcommand{\go}{G^{(0)}}
\renewcommand{\phi}{\varphi}
\newcommand{\upchi}{{\raise.35ex\hbox{\ensuremath{\chi}}}}
\newcommand{\supp}{\operatorname{supp}}
\newcommand{\susbeteq}{\subseteq} 
\begin{document}
\title[Fourier coefficients and rapid decay]{Fourier coefficients and rapid decay in reduced groupoid C*-algebras}

\author[A.H. Fuller]{Adam H. Fuller}
\address{
Department of Mathematics\\
Ohio University\\
Athens\\
OH 45701 U.S.A.}
\email{fullera@ohio.edu}

\author[P. Karmakar]{Pradyut Karmakar}
\email{pk481519@ohio.edu}
\begin{abstract}
Let $\Sigma \rightarrow G$ be a twist over a locally compact Hausdorff \'{e}tale groupoid $G$.
Given $f$ in the reduced C$^*$-algebra $C_r^*(\Sigma;G)$ with open support $U \subseteq G$ we ask when $f$ lies in the closure of the compactly supported sections on $U$.
Suppose $G$ satisfies the rapid decay property with respect to a length function $L$.
We give a positive answer to our question in two instances: when  $L$ is conditionally negative-definite, and when $L$ is the square-root of a locally negative type function on $G$.
\end{abstract} 
\maketitle

\section{Introduction}
Let $G$ be a Hausdorff locally compact groupoid, and let $\Sigma \rightarrow G$ be a twist over $G$.
It is well known that there is a contractive homomorphism from the reduced C$^*$-algebra $C_r^*(\Sigma;G)$ and the space of $C_0$-sections on the line bundle $L$ over $G$ induced by $\Sigma$ with the $\sup$-norm.
We can thus, by an abuse of notation, view $C_r^*(\Sigma;G)$ as a subset of $C_0(\Sigma;G)$. 
Given an $f \in C_r^*(\Sigma; G)$, let $U = \{\gamma \in G \colon f(\gamma) \neq 0\}$ be the open support of $f$, as viewed as a section on $G$.
This note is motivated by the following question: does the support of $f$ determine $f$?
Explicitly, we ask:
\begin{question}\label{question}
If $f \in C_r^*(\Sigma;G)$ and $U$ is the support of $f \in G$, are there compactly supported sections on $U$ which approximate $f$ in the C$^*$-norm of $C_r^*(\Sigma; G)$? 
\end{question}
When $\gamma \in G$, we view $f(\gamma)$ as the $\gamma$-th Fourier coefficient of $f$.
Thus, we view Question~\ref{question} as asking if we can recover elements of $C_r^*(\Sigma;G)$ from their Fourier coefficients.

To motivate this question further, let $\Gamma$ be a discrete group acting on a Hausdorff locally compact space $X$, 
and $C_0(X) \rtimes_r \Gamma$ be the reduced crossed product.
If $a \in C_0(X) \rtimes_r \Gamma$, then $a$ has a unique Fourier series
$$ a \sim \sum_{g \in \Gamma} a_g \cdot g,$$
where each $a_g \in C_0(X)$.
One can alternatively see the crossed product $C_0(X) \rtimes_r \Gamma$ as a groupoid C$^*$-algebra.
That is $C_0(X) \rtimes_r \Gamma$ is isomorphic to the reduced groupoid C$^*$-algebra $C_r^*(\Gamma \times X)$ of the transformation group $\Gamma \times X$.
From this perspective, if $a \in C_r^*(\Gamma \times X)$ then its support is precisely
$$ U = \bigcup_{g \in \Gamma}[\{g\} \times \supp(a_g)],$$
where $a \sim \sum_g a_g \cdot g$ is the Fourier series of $a$.
Thus, in this setting, Question~\ref{question} is explicitly asking if $a$ can be recovered from its Fourier series.

The study of the relationship between $a \in C_0(X) \rtimes_r \Gamma$ and the Fourier series of $a$ has a long history.
Let $a \sim \sum_g a_g \cdot g$ be the Fourier series of an element $a \in C_0(X)\rtimes_r \Gamma$.
Some affirmative answers to Question~\ref{question} for crossed products (often in more generality than stated here) include, but are not limited to: when $\Gamma$ is amenable \cite{Zeller-Meier}; when $\Gamma$ is weakly amenable \cite{BedCon2015}; when $\Gamma$ has the approximation property of Haagerup-Kraus \cite{CraNeu2022, Suzuki2017}; and when $\Gamma$ has the Haagerup property and the rapid decay property with respect to the associated length function \cite{BedCon2015}.

For a Hausdorff locally compact groupoid $G$ with a twist $\Sigma \rightarrow G$, Question~\ref{question} is again known to have a positive in certain cases.
When $G$ is amenable, this follows from \cite[Theorem~4.2]{BroExeFulPitRez2021} (corrected proof in \cite{brown2024corrigendum}; see Theorem~\ref{thm: amen}).
When $f \in C_r^*(\Sigma;G)$ has closed open support, then Question~\ref{question} also has an affirmative answer by \cite[Lemma~3.8]{DGNRW} (see Theorem~\ref{thm: clopen}).
There are, however, times when there is a negative answer to Question~\ref{question}.
One class of such examples are the HLS groupoids introduced in \cite{HLS} (see Example~\ref{ex: HLS}).

In this paper, we explore Question~\ref{question} for groupoids satisfying the rapid decay property.
In our first result in this direction, Theorem~\ref{thm: rdp supp}, we show that if $G$ has the rapid decay property with respect to a continuous length function $L$, then there is a dense $*$-algebra $A \subseteq C_r^*(\Sigma;G)$ such that if $f \in A$ and $\supp(f) = U$ then $f \in \overline{C_c(\Sigma|_U;U)}^{\|\cdot\|_r}$.
The dense algebra $A$ is the Schwartz space of $G$ with respect to $L$, $H^{2,L}(\Sigma;G)$.

In Theorem~\ref{thm: negative RDP} we show that $f \in \overline{C_c(\Sigma|_U;U)}^{\|\cdot\|_r}$ for all $f \in C_r^*(\Sigma;G)$ when $L$ is conditionally negative-definite.
In Theorem~\ref{thm: main} we show that if $G$ is a second-countable locally compact Hausdorff groupoid satisfying the Haagerup property and the rapid decay property with respect an associated length function, then Question~\ref{question} has an affirmative answer.

Our three main theorems, Theorem~\ref{thm: rdp supp}, Theorem~\ref{thm: negative RDP}, and Theorem~\ref{thm: main}, give groupoid analogues of results of B\'edos and Conti for groups, crossed products, and twisted crossed products. 
In particular, Theorem~\ref{thm: rdp supp} should be compared to \cite[Theorem~1.1]{BedCon2009}; Theorem~\ref{thm: negative RDP} should be compared to \cite[Corollary~6.5]{BedCon2015}; and Theorem~\ref{thm: main} should be compared to \cite[Theorem~5.9]{BedCon2009}.
The tools we use are, however, necessarily different.
In particular, we apply results of Hou \cite{Hou2017}, Weygandt \cite{weygandt2023rapid}, and Kwa\'{s}niewski-Li-Skalski \cite{MR4404070}.
The study of rapid decay for groupoids was introduced by Hou \cite{Hou2017}, and extended to twists by Weygandt \cite{weygandt2023rapid}.
The Haagerup property for Fell bundles over groupoids was introduced by Kwa\'{s}niewski, Li and Skalski \cite{MR4404070}. We use the definitions from \cite{MR4404070} in Section~\ref{sec: haagerup}.

\section{Groupoids and their $C^{\ast}$-algebras} 
We briefly summarize the key definitions and concepts needed for groupoids, twists, and their C$^*$-algebras.
We point the reader to \cite[Part~II]{SiGaWi2020} and \cite[Section~2]{BrFuPiRe2021} for a detailed background.
A \emph{groupoid} $G$ is a small category where each morphism has inverse.
The \emph{source} map and \emph{range} map are given by 
\begin{align*}
    &s(\gamma)=\gamma^{-1} \gamma \\
    &r(\gamma)=\gamma \gamma^{-1},
\end{align*}
for all $\gamma \in G$, respectively.
A \emph{topological groupoid} is a groupoid with a topology in which multiplication and inversion maps are continuous.
An open set $B$ in a topological groupoid is called a \emph{bisection} if $s|_{B}$ is a homeomorphism.
A topological groupoid $G$ is called \emph{\'etale} if the topology consists of a basis of open bisections.
The units of a groupoid $G$ are the objects of the small category.
We denote the units of $G$ by $\GG$.
That is
$$ \GG = \{ \gamma^{-1} \gamma \colon \gamma \in G \}.$$
Note that for an \emph{\'etale Hausdorff groupoid} $G$, the unit space $\GG$ is a closed and open subset of $G$.
For every element $x$ of the unit space $\GG$, denote $G_x=\{\gamma \in G: s(\gamma)=x\}$ and $G^x=\{\gamma \in G \colon r(\gamma)=x\}$.
For $x \in \GG$, the \emph{isotropy groupoid} $\mathrm{Iso}(G)=\bigcup_{x \in \GG} G_x^{x}$, where $G_x^{x}:=G_x \cap G^x$.
A groupoid is \emph{topologically principal} if $\{x \in \GG: G_x^{x}=\{x\}\}$ is dense in $\GG$. 
A groupoid is called \emph{principal} if the set $\{\gamma \in G: s(\gamma)=r(\gamma)\}$ is $\GG$.
For an \'etale groupoid $G$, note that $G_x$ and $G^{x}$ are discrete subsets of $G$ by \cite[Corollary~8.4.10]{SiGaWi2020}.
Throughout this article, all groupoids are topological and assumed to be locally compact Hausdorff.

 Let $G$ be a locally compact Hausdorff \'{e}tale groupoid. 
 Let $\Sigma$ be a groupoid. Then $\Sigma$ is a \emph{twist} over $G$, denoted $\Sigma\rightarrow G$, if
 $$ \bT \times \GG \xrightarrow{\iota} \Sigma \xrightarrow{q} G$$
is a locally trivial central extension of $G$.
A twist $\Sigma \rightarrow G$ determines a line bundle $L$ over $G$.
Explicitly $L = (\bC \times \Sigma)/\sim$, where $(z_1,\gamma_2) \sim (z_2,\gamma_2)$ if there is a $\lambda \in \bT$ such that $(z_1,\gamma_1) = (\overline{\lambda}z_2,\lambda\cdot \gamma_2)$.
The surjection $P \colon L \rightarrow G$ is given by $P([\lambda,\gamma])=q(\gamma).$
For $[\lambda,\gamma] \in L$ we define the absolute value to be $|[\lambda,\gamma]|=|\lambda|$.

Denote by $C_c(\Sigma;G)$ the space of all compactly supported continuous sections $f \colon G \rightarrow L$.
The space $C_c(\Sigma;G)$ becomes a $*$-algebra with the convolution product
\begin{equation}\label{eq: conv product}
(f * g)(\gamma) = \sum_{\alpha\beta = \gamma} f(\alpha) g(\beta),
\end{equation}
and adjoint
\begin{equation}\label{eq: adjoint}
 f^*(\gamma) = \overline{f(\gamma^{-1})}.
\end{equation}
Note that the space $C_c(\Sigma;G)$ is sometimes defined as the space of continuous, compactly supported covariant function $f \colon \Sigma \rightarrow G$.
The equivalence of these two definitions is described in \cite[Section~2.2]{BrFuPiRe2021}.

The \emph{reduced C$^*$-algebra} $C_r^*(\Sigma;G)$ of $\Sigma\rightarrow G$ is the completion of $C_c(\Sigma;G)$ with respect to the reduced norm $\|\cdot\|_r.$
This is the norm induced on by the left regular representations of $C_c(\Sigma;G)$ on $\ell^2(\Sigma_x; G_x)$ for each $x \in \GG$, where $\ell^2(\Sigma_x; G_x)$ is the square-summable sections on $G_x$.

When $U \subseteq G$, we denote by $C_c(\Sigma|_U;U)$ the sections in $C_c(\Sigma;G)$ supported in $U$.
By \cite[Lemma~2.7]{BrFuPiRe2021} $H \subseteq G$ is an open subgroupoid, then $\Sigma|_H \rightarrow H$ is a twist, and the inclusion map $C_c(\Sigma|_H;H) \hookrightarrow C_c(\Sigma;G)$ extends to an inclusion of the C$^*$-algebras $C_r^*(\Sigma|_H;H)$ into $C_r^*(\Sigma;G).$

\subsection{Fourier coefficients}
The identity map $j(f) = f$ between $C_c(\Sigma;G)$ with the reduced norm $\|\cdot\|_r$, and $C_c(\Sigma;G)$ with the $\sup$-norm $\|\cdot\|_\infty$ is a contractive $*$-homomorphism (with the convolution product \eqref{eq: conv product} and adjoint \eqref{eq: adjoint} on each copy of $C_c(\Sigma;G)$).
The map $j$ extends to an injective contractive homomorphism $j \colon C_r^*(\Sigma;G) \rightarrow C_0(\Sigma;G)$ \cite[Proposition~2.21]{BrFuPiRe2021}.
Henceforth, we will identify $C_r^{\ast}(\Sigma; G) \subseteq C_0(\Sigma; G)$ suppressing $j$.
Given $f \in C_r^*(\Sigma;G)$ and $\gamma \in G$, we view $f(\gamma)$ as the $\gamma$-th \emph{Fourier coefficient} of $f$.
When $G$ is a group $\Gamma$ or a transformation group $\Gamma \times X$ this terminology is consistent with the usual usage of the term Fourier coefficient.

\begin{definition}
Let $G$ be a locally compact Hausdorff \'{e}tale groupoid, and let $\Sigma \rightarrow G$ be a twist. 
Given $f \in C_r^*(\Sigma;G)$ we define the \emph{support of $f$}, denoted by $\supp(f)$, to be the open set
$$ \supp(f) = \{\gamma \in G \colon f(\gamma) \neq 0 \}.$$
\end{definition}

We are interested in when an element $f \in C_r^*(\Sigma;G)$ can be ``recovered" from its Fourier coefficients.
We ask, if $U = \supp(f)$ is $f$ in $\overline{C_c(\Sigma|_U;U)}^{\|\cdot\|_r}$.
Our main results are positive answers to this question in the presence of the rapid decay property.
These are given in Theorem~\ref{thm: rdp supp}, Theorem~\ref{thm: negative RDP}, and Theorem~\ref{thm: main}.

In this subsection, we collate known positive results and counter-examples.
The following two theorems were proved, and stated, for subalgebras of $C_r^*(\Sigma;G)$ and subgroupoids of $G$. 
We state them here in more generality; the arguments in the original sources do not need to be altered significantly to do this.
The following theorem is stated in \cite{BroExeFulPitRez2021}.
We warn the reader that the proof in \cite{BroExeFulPitRez2021} is incorrect, but has been corrected in \cite{brown2024corrigendum}.

\begin{theorem}[{cf.~\cite[Theorem~4.2]{BroExeFulPitRez2021}}]\label{thm: amen}
Let $G$ be an amenable locally compact Hausdorff \'{e}tale groupoid, and let $\Sigma \rightarrow G$ be a twist.
Take $f \in C_r^*(\Sigma;G)$ with $\supp(f) = U$.
Then $f \in \overline{C_c(\Sigma|_U;U)}^{\|\cdot\|_r}$.
\end{theorem}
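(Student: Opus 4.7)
The plan is a two-step approximation: first, use amenability to approximate $f$ in reduced norm by continuous compactly supported sections that already vanish on $G \setminus U$; then, cut each such section down to have compact support strictly inside $U$.

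For the first step, topological amenability of $G$ supplies a net $(\xi_i) \subseteq C_c(G)$ of continuous positive-type functions with $\xi_i \to 1$ uniformly on compact sets. Each $\xi_i$ induces a completely positive multiplier $M_i$ on $C_r^*(\Sigma;G)$ acting on $C_c(\Sigma;G)$ by pointwise multiplication $M_i(h)(\gamma) = \xi_i(\gamma) h(\gamma)$, and $M_i(f) \to f$ in $\|\cdot\|_r$. Since $f$ is continuous as a section of the line bundle, $M_i(f) = \xi_i \cdot f$ is a continuous section with support in the compact set $K_i := \supp(\xi_i)$, hence $M_i(f) \in C_c(\Sigma;G)$; moreover $M_i(f)$ vanishes on $G \setminus U$ because $f$ does.

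For the second step, fix $i$ and set $F := M_i(f)$. For each $\delta > 0$ the set $K_\delta := \{\gamma : |F(\gamma)| \geq \delta\}$ is closed in $K_i$, hence compact, and is contained in $U$ because $F$ is zero outside $U$. Since $U$ is open in the locally compact Hausdorff space $G$, it is itself locally compact Hausdorff, so Urysohn's lemma supplies $\phi_\delta \in C_c(G)$ with $0 \leq \phi_\delta \leq 1$, $\supp(\phi_\delta)$ a compact subset of $U$, and $\phi_\delta \equiv 1$ on $K_\delta$. Then $\phi_\delta F$ lies in $C_c(\Sigma|_U; U)$, and the difference $F - \phi_\delta F$ is supported in $K_i$ with sup-norm at most $\delta$.

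To pass from uniform to reduced-norm convergence, cover $K_i$ by finitely many open bisections $B_1, \ldots, B_n$ and choose a subordinate partition of unity. Since the reduced norm coincides with the sup norm on sections supported in a single bisection, this yields a constant $C_{K_i}$ with $\|g\|_r \leq C_{K_i} \|g\|_\infty$ for every $g \in C_c(\Sigma;G)$ supported in $K_i$. Applying this to $F - \phi_\delta F$ gives $\phi_\delta F \to F$ in $\|\cdot\|_r$, whence $F \in \overline{C_c(\Sigma|_U; U)}^{\|\cdot\|_r}$; combining with $M_i(f) \to f$ yields the claim. The main obstacle is precisely the second step: the closed support of $F$ may touch the boundary $\partial U$, so one cannot simply declare $F \in C_c(\Sigma|_U;U)$. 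The bisection-cover bound is what converts the trivial uniform cutoff into a genuine reduced-norm approximation within $U$, while amenability is what makes the first, coarser approximation $M_i(f) \to f$ possible at all.
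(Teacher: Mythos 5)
Your argument is correct. Note that the paper does not actually prove Theorem~\ref{thm: amen}: it is quoted from \cite[Theorem~4.2]{BroExeFulPitRez2021} together with the corrigendum, so there is no internal proof to compare against. Your two-step scheme is exactly the natural one, and it is consistent with the machinery the paper builds elsewhere: your first step is an instance of Theorem~\ref{thm: mult approx} applied to the net of compactly supported positive-definite functions furnished by topological amenability, and your second step (Urysohn cutoff plus a norm comparison on a fixed compact set) plays the role that Proposition~\ref{prop: ell2 support} plays in the rapid-decay arguments. Two small points are worth making explicit. First, Theorem~\ref{thm: mult approx} requires the net $(\xi_i)$ to be \emph{uniformly bounded}; this does hold for the standard amenability net, since a positive-definite $\xi$ satisfies $|\xi(\gamma)|^2\leq \xi(r(\gamma))\xi(s(\gamma))$ and the construction gives $\xi_i|_{\GG}\leq 1$, but you should say so, as uniform convergence to $1$ on compacta alone does not bound $\sup_{x\in\GG}\xi_i(x)$ when $\GG$ is noncompact. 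Second, the identity $\|g\|_r=\|g\|_\infty$ for $g$ supported in a bisection needs bisections on which \emph{both} $r$ and $s$ are injective (so that $g^**g$ is supported on $\GG$); since $G$ is \'etale you can refine any cover of $K_i$ to one by such sets, so the constant $C_{K_i}$ (equal to the number of sets in the cover) exists as you claim. You also implicitly use that the extension of $M_{\xi_i}$ to $C_r^*(\Sigma;G)$ intertwines with pointwise multiplication under the inclusion $j\colon C_r^*(\Sigma;G)\hookrightarrow C_0(\Sigma;G)$; this follows from continuity of $j$ and is what guarantees that $M_{\xi_i}(f)$ really is the compactly supported continuous section $\xi_i\cdot f$ vanishing off $U$. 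With these remarks your proof is complete.
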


The next theorem gives a positive answer when we assume the support of $f$ is clopen.
In \cite{DGNRW}, this is used to identify Cartan subalgebras of $C_r^*(\Sigma;G)$.

\begin{theorem}[{cf.~\cite[Lemma~3.8]{DGNRW}}]\label{thm: clopen}
    Let $G$ be a locally compact Hausdorff \'{e}tale groupoid, and let $\Sigma \rightarrow G$ be a twist.
Take $f \in C_r^*(\Sigma;G)$ with $\supp(f) = U$.
If $U$ is closed in $G$, then $f \in \overline{C_c(\Sigma|_U;U)}^{\|\cdot\|_r}$.
\end{theorem}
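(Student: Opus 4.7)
Since $U=\supp(f)$ is open by definition and closed by assumption, $U$ is clopen in $G$, so $\chi_U\colon G\to\{0,1\}$ is a continuous function on $G$. The plan is to start from an arbitrary approximation of $f$ in $C_c(\Sigma;G)$ and truncate each approximant to $U$ via pointwise multiplication by $\chi_U$.

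Concretely, by density, pick $g_n\in C_c(\Sigma;G)$ with $\|g_n-f\|_r\to 0$ and set $\tilde g_n:=g_n\cdot\chi_U$, the pointwise product of sections of $L$. Since $\chi_U$ is continuous and $g_n$ is continuous and compactly supported, $\tilde g_n$ is continuous with support $\supp(g_n)\cap U$; this is compact as a closed subset of the compact set $\supp(g_n)$, so $\tilde g_n\in C_c(\Sigma|_U;U)$. Using $f=f\chi_U$ (valid because $\supp(f)\subseteq U$), we obtain $\tilde g_n-f=(g_n-f)\chi_U$, and the theorem reduces to showing $\|(g_n-f)\chi_U\|_r\to 0$.

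The cleanest way to finish is to show that pointwise multiplication $M_{\chi_U}\colon h\mapsto h\chi_U$ extends from $C_c(\Sigma;G)$ to a bounded (ideally contractive) linear endomorphism of $C_r^*(\Sigma;G)$. This is the main obstacle. To establish it, I would use the faithful family of left regular representations $\pi_x\colon C_r^*(\Sigma;G)\to B(\ell^2(\Sigma_x;G_x))$ for $x\in\GG$. The clopen partition $G=U\sqcup(G\setminus U)$ induces, for each $x$, an orthogonal decomposition $\ell^2(\Sigma_x;G_x)=\ell^2(\Sigma_x;G_x\cap U)\oplus\ell^2(\Sigma_x;G_x\setminus U)$; combining this with the étale structure (in particular, the bisection decomposition of any $h\in C_c(\Sigma;G)$ as a finite sum of sections supported in open bisections) should allow one to identify $\pi_x\circ M_{\chi_U}$ as a contractive Schur-type multiplier. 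The clopen hypothesis is essential here: for merely open $U$, the analogous statement fails (witness the HLS counterexamples referenced in the introduction), so any successful argument must use the continuity of $\chi_U$ in a nontrivial way.

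Once the contractivity of $M_{\chi_U}$ is secured, one obtains $\|\tilde g_n-f\|_r\leq\|g_n-f\|_r\to 0$, so $\tilde g_n\to f$ in $\|\cdot\|_r$, which places $f$ in $\overline{C_c(\Sigma|_U;U)}^{\|\cdot\|_r}$.
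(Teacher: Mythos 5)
The reduction is fine up to its last step, but that last step is where the proof fails: pointwise multiplication by $\chi_U$ does \emph{not} extend to a bounded map on $C_r^*(\Sigma;G)$ for a general clopen set $U$, so the inequality $\|\tilde g_n-f\|_r\leq\|g_n-f\|_r$ you are aiming for is unavailable. Concretely, take $G=\bZ$ viewed as a discrete group (an \'etale groupoid with a single unit), so that \emph{every} subset is clopen, and take $U=\{n\in\bZ:n\geq 0\}$. Then $C_r^*(\bZ)\cong C(\bT)$ and $M_{\chi_U}$ is the Riesz projection $\sum_n a_nz^n\mapsto\sum_{n\geq 0}a_nz^n$, which is unbounded on $C(\bT)$ (its norm on trigonometric polynomials of degree $N$ grows like $\log N$). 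Your proposed mechanism also cannot deliver the bound: the orthogonal decomposition $\ell^2(\Sigma_x;G_x)=\ell^2(\Sigma_x;G_x\cap U)\oplus\ell^2(\Sigma_x;G_x\setminus U)$ produces the block-diagonal compression of $\pi_x(h)$, i.e.\ the Schur multiplier with symbol $\chi_U(\gamma)\chi_U(\beta)+\chi_{G\setminus U}(\gamma)\chi_{G\setminus U}(\beta)$ on $G_x\times G_x$, whereas $M_{\chi_U}$ corresponds to the symbol $\chi_U(\gamma\beta^{-1})$; these agree only when ``$\gamma\beta^{-1}\in U$'' defines an equivalence relation on each $G_x$, i.e.\ essentially when $U$ is a subgroupoid containing the relevant units.

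That is exactly the structure the cited source exploits: \cite[Lemma~3.8]{DGNRW} is proved for clopen \emph{subgroupoids}, where $h\mapsto h\chi_U$ is the block-diagonal compression with respect to the partition of each $G_x$ into $U$-classes, hence a contractive (indeed completely positive) conditional expectation, and one then truncates approximants precisely as you propose. For a clopen set with no algebraic structure, that argument --- and any argument routed through boundedness of $M_{\chi_U}$ --- is unavailable; note that the conclusion is nevertheless true for $G=\bZ$ only because of a different mechanism (Fej\'er/amenability: one multiplies by a net of positive-definite functions converging to $1$, as in Theorem~\ref{thm: amen} and Proposition~\ref{cpm}, rather than by $\chi_U$). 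So the gap is not cosmetic: you need either to restrict to clopen subgroupoids, or to find an argument that uses the hypothesis $\supp(f)=U$ in a way that does not pass through a bounded truncation operator.
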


It is not always the case that we can recover elements of $C_r^*(\Sigma;G)$ from their supports.
We will now describe how failure of $G$ to be inner-exact (in the sense of \cite{BonLi2020}) gives rise to such elements.

A subset $U \subseteq \GG$ is \emph{invariant} if $r(\gamma) \in U$ implies $s(\gamma) \in U$ for all $\gamma \in G$.
Given an invariant $U \subseteq \GG$ we define $G_U$ to be the subgroupoid
$$G_U = \{\gamma \in G \colon r(\gamma) \in U\}.$$
Given an open invariant set $U \subseteq \GG$, with $F = \GG \backslash U$, we get the following commutative diagram
\begin{equation*}
\label{comm diagram}
\xymatrix{
C_r^*(\Sigma_{G|_U};G_U)\ar[r]\ar[d]^{E_U} & C_r^*(\Sigma;G)\ar[d]^{E}\ar@{->>}[r]^\phi &C_r^*(\Sigma|_{G_F};G_F)\ar[d]^{E_F}\\
 C_0(U)\ar[r] & C_0(\go)\ar@{->>}[r]^\theta& C_0(F)
}
\end{equation*}
where the vertical arrows are given by faithful conditional expectations.
The bottom line will be exact, however the top line may not be exact.
The ideal $\ker\phi$ will consist of all $f \in C_r^*(\Sigma;G)$ such that $\supp(f) \subseteq G_U$.
Thus, if the top line is not exact, then there exist $f \in C_r^*(\Sigma;G)$ such that $\supp(f) \subseteq G_U$ but
$$f \notin \overline{C_c(\Sigma|_{G_U};G_U)}^{\|\cdot\|_r} = C_r^*(\Sigma|_{G_U};G_U).$$
Higson-Lafforgue-Skandalis \cite{HLS} give a class of groupoids where the top line will not be exact.

\begin{example}\label{ex: HLS}
Let $\Gamma$ be an infinite group and let $\{K_n\}_n$ be a nested family of normal finite-index subgroups of $\Gamma$ such that $K_{n+1} \subseteq K_n$, and $\bigcap_n K_n = \{e\}.$ 
For each $n$, let $\Gamma_n = \Gamma/K_n$, and let $\Gamma_\infty = \Gamma$.
The \emph{HLS groupoid} for $\Gamma,\ \{K_n\}_n$ is defined as the disjoint union
$$ G = \bigsqcup_{n \in \bN \cup \{\infty\}} \Gamma_n$$
with topology induced by the compact topology on $\bN \cup \infty$; see \cite{anantharamandelaroche2021exactgroupoids}, \cite{HLS} or\cite{Wil2015} for details.
Consider the sequence
\begin{align}\label{eq: HLS inexact}
C_r^*(G_\bN) \rightarrow C_r^*(G) \rightarrow C_r^*(\Gamma).
\end{align}
Higson-Lafforgue-Skandalis \cite{HLS} give sufficent conditions for \eqref{eq: HLS inexact} not to be in exact.
In fact, \eqref{eq: HLS inexact} is exact if and only if $\Gamma$ is amenable \cite{anantharamandelaroche2021exactgroupoids, Wil2015}.

Thus, if $G$ is an HLS-groupoid constructed from a non-amenable group $\Gamma$, there exist $f \in C_r^*(\Gamma)$ with $\supp(f) \subseteq G_\bN$ but $f \notin C_r^*(G_\bN)$.
Willett \cite{Wil2015} gives an example where $\Gamma = \bF_2$ and $C_r^*(G)/C_r^*(G_\bN) \simeq C^*(\bF_2)$, the full C$^*$-algebra for $\bF_2$.
\end{example}

\section{Schwartz space and Rapid Decay Property}\label{sec: schwartz}

\subsection{The $2$-norm}
In addition to the reduced norm $\|\cdot\|_r$ and the $\sup$-norm $\|\cdot\|_\infty$, we will need several more norms on $C_c(\Sigma; G)$, for a twist $\Sigma\rightarrow G$.
We recall here the  $2$-norm and some key facts.

\begin{definition}
Define a norm $\|\cdot\|_{2}$ on $C_c(\Sigma; G)$ by
\begin{align*}
    \|f\|_{2} &= \sup_{x\in G^{(0)}} \max \left\{ \sum_{\gamma \in G_x} |f(\gamma)|^2,\ \sum_{\gamma \in G^x} |f(\gamma)|^2 \right\}^{1/2},
\end{align*}
for each $f \in C_c(\Sigma; G)$.
Denote 
by $\ell^2(\Sigma; G)$ the space $\overline{C_c(\Sigma; G)}^{\|\cdot\|_{2}}$.
\end{definition}

We note that for any  $g \in \ell^2(\Sigma;G)$ he have that
\begin{align*}
    \|g\|_{2} &= \sup_{x\in G^{(0)}} \max \left\{ \sum_{\gamma \in G_x} |g(\gamma)|^2,\ \sum_{\gamma \in G^x} |g(\gamma)|^2 \right\}^{1/2}.
\end{align*}

The  norms on $C_c(\Sigma;G)$, $\|\cdot\|_\infty$, $\|\cdot\|_r$, and $\|\cdot\|_{2}$ are related by the following inequality
$$ \|f\|_\infty \leq \|f\|_{2} \leq \|f\|_r $$
for each $f \in C_c(\Sigma;G)$, see e.g. \cite{RenBook}.

Closely related to the $2$-norm is the function
\begin{align}
    \label{eq: 2 func} x & \mapsto \max \left\{ \sum_{\gamma \in G_x} |f(\gamma)|^2,\ \sum_{\gamma \in G^x} |f(\gamma)|^2\right\}^{1/2}
\end{align}
for $x \in \go$ and a fixed $f \in C_c(\Sigma;G)$.
This function is continuous on $\go$, since the counting measure of $G$ forms a Haar system, see e.g. \cite[Lemma~I.2.7]{RenBook}.
Further,  $\eqref{eq: 2 func}$ is continuous for any $f \in \ell^2(\Sigma; G)$.
We prove this is in the following lemma.
That the analogous statement for the $1$-norm is continuous is \cite[Lemma~2.6]{AusOrt2022}; or one can follow a similar proof as below.
 
 \begin{lemma}\label{lem: cont 2 sum}
    Let $\Sigma \rightarrow G$ be a twist over an \'etale locally compact Hausdorff groupoid.
    Then for each $f \in \ell^2(\Sigma; G)$ the function on $\go$ defined by
    \begin{align*}
    x & \mapsto \max \left\{ \sum_{\gamma \in G_x} |f(\gamma)|^2,\ \sum_{\gamma \in G^x} |f(\gamma)|^2\right\}^{1/2}
    \end{align*}
    is continuous.
\end{lemma}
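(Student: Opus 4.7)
The plan is to realize the function in question as a uniform limit of continuous functions, using the fact (quoted in the excerpt from \cite[Lemma~I.2.7]{RenBook}) that continuity already holds on the dense subspace $C_c(\Sigma;G) \subseteq \ell^2(\Sigma;G)$. For $f \in \ell^2(\Sigma;G)$ and $x \in \go$, write
\begin{align*}
F^s_f(x) &= \Bigl(\sum_{\gamma \in G_x} |f(\gamma)|^2\Bigr)^{1/2}, &
F^r_f(x) &= \Bigl(\sum_{\gamma \in G^x} |f(\gamma)|^2\Bigr)^{1/2},
\end{align*}
and let $F_f(x) = \max\{F^s_f(x), F^r_f(x)\}$, so that $\|f\|_2 = \sup_{x \in \go} F_f(x)$.

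First I would establish the pointwise estimate
\[
|F_f(x) - F_g(x)| \leq F_{f-g}(x) \leq \|f-g\|_2 \qfor f,g \in \ell^2(\Sigma;G),\ x \in \go.
\]
For each fixed $x$, both $G_x$ and $G^x$ are discrete and counting measure turns $F^s_f(x)$ and $F^r_f(x)$ into genuine $\ell^2$-norms; hence the ordinary triangle and reverse triangle inequalities give $|F^s_f(x) - F^s_g(x)| \leq F^s_{f-g}(x)$ and the analogous bound for $F^r$. Combining these with the elementary identity $|\max\{a,b\} - \max\{c,d\}| \leq \max\{|a-c|,|b-d|\}$ yields the displayed estimate.

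Next, pick a sequence $f_n \in C_c(\Sigma;G)$ with $\|f_n - f\|_2 \to 0$, which exists by definition of $\ell^2(\Sigma;G) = \overline{C_c(\Sigma;G)}^{\|\cdot\|_2}$. By the pointwise estimate, $\sup_{x \in \go} |F_{f_n}(x) - F_f(x)| \leq \|f_n - f\|_2 \to 0$, so $F_{f_n} \to F_f$ uniformly on $\go$. Each $F_{f_n}$ is continuous by the already-stated compactly supported case, and a uniform limit of continuous functions is continuous; therefore $F_f$ is continuous.

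I expect the only mild obstacle to be making rigorous the reverse triangle inequality on the $\ell^2$ sums along $G_x$ and $G^x$ for a general $f \in \ell^2(\Sigma;G)$, since $F^s_f(x)$ and $F^r_f(x)$ are defined as honest sums (possibly infinite) rather than limits of compactly supported approximations; but this is immediate once one notes that each fibre sum is just the $\ell^2$-norm on the discrete set $G_x$ (resp. $G^x$). The remaining steps are standard.
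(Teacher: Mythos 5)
Your proof is correct, and it reaches the same destination as the paper's --- realizing the function as a uniform limit of the corresponding continuous functions attached to elements of $C_c(\Sigma;G)$ --- but by a cleaner mechanism. The paper works with the squared sums $T_f(x)=\sum_{\gamma\in G_x}|f(\gamma)|^2$ and obtains uniform convergence by a hands-on construction: it first shows that the tails $\sum_{\gamma\in G_x\setminus K_\varepsilon}|f(\gamma)|^2$ can be made uniformly small in $x$ for a suitable compact $K_\varepsilon$, then approximates $f$ by the specific truncations $g_n=fh_n$ with Urysohn cutoffs $h_n$, and argues that $T_{g_n}\to T_f$ uniformly. Your argument replaces all of this with the single observation that $x\mapsto F^s_f(x)$ is the fibrewise $\ell^2(G_x)$-norm, so the reverse triangle inequality gives $\sup_x|F_{f_n}(x)-F_f(x)|\le\|f_n-f\|_2$ for an \emph{arbitrary} approximating sequence $f_n\in C_c(\Sigma;G)$; no tail estimates or explicit cutoffs are needed. (The paper's preference for the squared sums is not an obstruction to your route: one could also bound $|T_f(x)-T_g(x)|\le\|f-g\|_2(\|f\|_2+\|g\|_2)$, so neither normalization forces the truncation argument.) The only points you flag as needing care are indeed harmless: each fibre of the line bundle is isometrically $\bC$, so $\bigl||f(\gamma)|-|g(\gamma)|\bigr|\le|f(\gamma)-g(\gamma)|$ pointwise, the fibre sums are finite because they are dominated by $\|\cdot\|_2^2$ (the paper records that the sum formula for the $2$-norm persists on the completion), and the inequality $|\max\{a,b\}-\max\{c,d\}|\le\max\{|a-c|,|b-d|\}$ is elementary. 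Your version is arguably the more economical proof.
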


\begin{proof}
For $f \in \ell^2(\Sigma; G)$ let $T_f$ be the function
\begin{align*}
 T_f \colon \go &\longrightarrow \bC \\ 
 x &\longmapsto \sum_{\gamma \in G_x} |f(\gamma)|^2.
\end{align*}
We will show that $T_f$ is continuous.
A similar argument works for the function $x \mapsto \sum_{\gamma \in G^x} |f(\gamma)|^2$, and the result readily follows from there.

We have already noted that $T_g$ is continuous for each $g \in C_c(\Sigma;G)$.
Now take an arbitrary $f \in \ell^2(\Sigma; G)$.
We claim that for each $\varepsilon>0$ there exists a compact set $K_{\epsilon} \subseteq G$ such that for each $x \in \GG$, we have
    \begin{align*}
        \sum_{\gamma \in G_x\backslash K_{\varepsilon}}|f(\gamma)|^{2} < \varepsilon.
    \end{align*}
    Indeed, we can choose $g \in C_c(\Sigma; G)$ such that $\|f-g\|_{2}^{2}< \varepsilon$. 
    Let $K\subset G$ be a compact set such that $\supp(g) \subset K$.
    Then the claim holds, with $K_\varepsilon = K.$

    Fix $n$, choose $K = K_{\frac{1}{n}}$.
    Let $U$ be an open set in $G$ such that $K \subseteq U$ and $\bar{U}$ is compact in $G$.
    By Urysohn's lemma, there exists a continuous function {$h_n \colon G \rightarrow [0,1]$} such that $h_n(\gamma)=1$ if $\gamma \in K$ and $\text{supp}(h_n) \subseteq U$.
    Let $g_n = fh_n$. Then $g_n \in C_c(\Sigma; G)$.

    One can show that the functions $(T_{g_n})_n$ coverge uniformly to $T_f$.
    As each $T_{g_n}$ is continuous, it follows that $T_f$ is continuous.
\end{proof}

We now show that functions in $\ell^2(\Sigma; G)$ are determined by their supports.

\begin{proposition}\label{prop: ell2 support}
        Let $G$ be an \'{e}tale, locally compact Hausdorff groupoid and let $\Sigma \rightarrow G$ be a twist.
    If $f \in \ell^2(\Sigma; G)$ and $\supp(f) \subseteq U$ for some open $U \subseteq G$, then $f \in \overline{C_c(\Sigma|_U;U)}^{\|\cdot\|_{2}}$.
\end{proposition}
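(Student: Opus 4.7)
My plan is to exploit the inequality $\|\cdot\|_\infty \leq \|\cdot\|_{2}$, which forces $\ell^2(\Sigma; G) \subseteq C_0(\Sigma; G)$: every element of $\ell^2(\Sigma; G)$ is a uniform limit of sections in $C_c(\Sigma; G)$, hence is continuous with modulus vanishing at infinity. Consequently $\{\gamma \in G : |f(\gamma)| \geq \delta\}$ is compact for every $\delta > 0$, and since $f$ vanishes on $G \setminus U$ (because $\supp f \subseteq U$), this set is a compact subset of $U$.

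Using this, I would build an explicit cutoff. Pick a continuous $\phi_\delta : [0, \infty) \to [0, 1]$ with $\phi_\delta \equiv 0$ on $[0, \delta]$ and $\phi_\delta \equiv 1$ on $[2\delta, \infty)$, and set $\psi_\delta := \phi_\delta \circ |f| \in C(G, [0,1])$. Then $\supp \psi_\delta \subseteq \{|f| \geq \delta\}$ is a compact subset of $U$, so $h_\delta := \psi_\delta \cdot f$ is continuous and compactly supported inside $U$; that is, $h_\delta \in C_c(\Sigma|_U; U)$.

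It remains to show $\|f - h_\delta\|_{2} \to 0$ as $\delta \to 0^+$, i.e.\ to bound $\|(1-\psi_\delta) f\|_{2}$. Given $\varepsilon > 0$, first pick $g \in C_c(\Sigma; G)$ with $\|f - g\|_{2} < \varepsilon/2$ and let $K = \supp g$, which is compact. Because $G$ is \'{e}tale and $K$ is compact, there is an integer $n$ with $|G_x \cap K| \leq n$ and $|G^x \cap K| \leq n$ for every $x \in \GG$ (cover $K$ by finitely many open sets on which both $s$ and $r$ are homeomorphisms). Since $(1-\psi_\delta) f$ is supported in $\{|f| < 2\delta\}$ and $|(1-\psi_\delta) f| \leq |f|$, for each $x \in \GG$ one has
\begin{align*}
\sum_{\gamma \in G_x} |(1-\psi_\delta) f(\gamma)|^2
&\leq \sum_{\gamma \in G_x \cap K,\ |f(\gamma)| < 2\delta} |f(\gamma)|^2 + \sum_{\gamma \in G_x \setminus K} |f(\gamma)|^2 \\
&\leq 4 n \delta^2 + \|f - g\|_{2}^2,
\end{align*}
using $g \equiv 0$ off $K$ in the second summand, and the analogous bound holds with $G^x$ replacing $G_x$. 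Hence $\|f - h_\delta\|_{2}^2 \leq 4 n \delta^2 + \varepsilon^2/4$, and once $n$ is known we can shrink $\delta$ so that $4 n \delta^2 < \varepsilon^2/4$ to conclude $\|f - h_\delta\|_{2} < \varepsilon$.

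The main obstacle is the order of quantifiers: the integer $n$ controlling the on-$K$ error depends on $K = \supp g$, which itself depends on how close an approximation is sought. The resolution is to fix $g$ (hence $K$ and $n$) \emph{first}, requiring only the loose bound $\|f - g\|_{2} < \varepsilon/2$, and only afterwards to shrink $\delta$ as a function of $n$; this dissolves the apparent circularity.
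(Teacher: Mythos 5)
Your proof is correct, but it follows a genuinely different route from the paper's. The paper first establishes that the fiberwise sum $T_f(x)=\sum_{\gamma\in G_x}|f(\gamma)|^2$ is continuous on $\GG$ (Lemma~\ref{lem: cont 2 sum}), then runs a covering argument: around finitely many points of each fiber it places bisections with compact closures inside $U$, uses continuity of $T_f$ to propagate the fiberwise tail estimate to a neighborhood of each unit, extracts a finite subcover of $s(K)$ and $r(K)$, and finally multiplies $f$ by a Urysohn function equal to $1$ on the resulting compact subset of $U$. You instead observe that $\ell^2(\Sigma;G)\subseteq C_0(\Sigma;G)$ forces the superlevel sets $\{|f|\ge\delta\}$ to be compact subsets of $U$, cut off with $\psi_\delta=\phi_\delta\circ|f|$, and control the error by splitting each fiber sum over $K=\supp(g)$ and its complement, using the standard fact that $|G_x\cap K|$ and $|G^x\cap K|$ are uniformly bounded for compact $K$ in an \'etale groupoid. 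Your quantifier ordering (fix $g$, $K$, $n$ first, then shrink $\delta$) is handled correctly, and all the individual steps check out: $|f|$ is continuous on $G$ since the absolute value descends to the line bundle, $\psi_\delta f$ is a continuous section compactly supported in $U$, and the bound $4n\delta^2+\varepsilon^2/4$ follows as you state. What your approach buys is brevity and self-containedness --- it bypasses Lemma~\ref{lem: cont 2 sum} and the bisection-by-bisection covering entirely, replacing them with the single uniform fiber-cardinality bound; the cutoff is also canonical, being built from $|f|$ itself rather than from an auxiliary Urysohn function. The paper's argument, by contrast, never invokes $f\in C_0(\Sigma;G)$ and works directly with the fiberwise mass distribution, which is closer in spirit to the Haar-system techniques used elsewhere in the groupoid literature.
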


\begin{proof}
Take $f \in \ell^2(\Sigma; G)$ with $\supp(f) \subseteq U$ for an open set $U \susbeteq G$.
Take any $\varepsilon > 0$. 
We will show that there exists $g \in C_c(\Sigma_U; U)$ so that $\|f - g\|_2 < \varepsilon$.

Take $K \subseteq G$ compact such that for all $x \in \go$
$$ \sum_{\gamma \in G_x\backslash K} |f(\gamma)|^2<\varepsilon,\text{ and } \sum_{\gamma \in G^x\backslash K} |f(\gamma)|^2< \varepsilon.$$
Note, $K$ exists since $\ell^2(\Sigma; G) = \overline{C_c(\Sigma; G)}^{\|\cdot\|_2}$.

Fix $x \in s(K)$.
There exist finite sets $F_x \subseteq G_x$ such that
$$ \sum_{\gamma \in G_x\backslash F_x} |f(\gamma)|^2<\varepsilon/3.$$
Let $F_x = \{ \gamma_1, \ldots, \gamma_n\}.$
For each $\gamma_i \in F_x$ there are bisections $V_i$ and $U_i$ such that $\gamma_i \in V_i \subseteq \overline{V_i} \susbeteq U_i \subseteq U$, where $\overline{V_i}$ is compact.
As $f$ is continuous, we can further assume that for each $\gamma_i \in F_x$ and $\sigma \in U_i$ that
$$ | |f(\gamma_i)|^2 - |f(\sigma)|^2| < \frac{\varepsilon}{3n},$$
by shrinking $U_i$ is necessary.

The function
$$ T_f(x) = \sum_{\gamma \in G_x} |f(\gamma)|^2 $$
is continuous by Lemma~\ref{lem: cont 2 sum}. 
Hence, there is an open neighborhood $A_x$ of $x$ such that for each $y \in A_x$
$$ |T_f(x) - T_f(y)| < \varepsilon/3.$$
Let 
$W_x = A_x \cap \bigcap_{i= 1}^ns(V_i)$
and let
$ K_x = \bigcup_{i=1}^n \overline{V_i}.$

Take $y \in W_x$.
For each $1\leq i \leq n$, let $\sigma_i = G_y\cap V_i$, and let $F = \{\sigma_1,\ldots,\sigma_n\}$.
Then
\begin{align*}
\sum_{\sigma \in G_y \backslash F} |f(\sigma)|^2 
&=  \left| T_f(y) - \sum_{i=1}^n|f(\sigma_i)|^2 \right| \\
&= \left| T_f(y) - T_f(x) + T_f(x) - \sum_{i=1}^n|f(\sigma_i)|^2 \right| \\
&\leq  \left| T_f(y) - T_f(x)\right| + \left|\sum_{\gamma \in G_x\backslash F_x}|f(\gamma)|^2\right| + \left| \sum_{i=1}^n(|f(\gamma_i)|^2-|f(\sigma_i)|^2) \right| \\
&< \frac{\varepsilon}{3} + \frac{\varepsilon}{3} + n\frac{\varepsilon}{3n}= \varepsilon.
\end{align*}
Note that $F = \{\sigma_1, \ldots, \sigma_n\} \subseteq K_x$.
Thus, if $y \in W_x$, then
$$ \sum_{\sigma \in G_y \backslash K_x} |f(\sigma)|^2 < \varepsilon.$$

As $s(K)$ is compact, there are $x_1, \ldots, x_m$ and associated sets constructed as above $W_1, \ldots, W_m$ covering $s(K)$. 
Let $L' = \bigcup_{i=1}^m K_i$, where $K_i=K_{x_i}$ is the compact set constructed as above.

Applying the same argument using the range map instead of source, we get open sets $W^1,\ldots,W^k$,  covering $r(K)$ and a compact sets $K^1,\ldots,K^k$ such that if $x \in W^i$
$$ \sum_{\gamma \in G^x\backslash K^i} |f(\gamma)|^2< \varepsilon.$$
Let $L'' = \bigcup_{i=1}^k K^i$.

Consider the compact set $L = L'\cup L'' \subseteq U$.
By Urysohn's Lemma there is a $h \in C_c(U)$ such that $h(U) \subseteq [0,1]$ and $h(\gamma) = 1$ for all $\gamma \in L$.
Let $g = hf$.
Then $g \in C_c(\Sigma_U; U)$.
Take any $x\in \go$.
If $x\in s(K)$, then $x \in W_i$ for some $i$ and so
\begin{align*}
\sum_{\gamma\in G_x}|f(\gamma) - g(\gamma)|^2 
&= \sum_{\gamma\in G_x\backslash L}|f(\gamma) - g(\gamma)|^2 
\leq \sum_{\gamma\in G_x\backslash L}|f(\gamma)|^2 \\
&\leq \sum_{\gamma\in G_x\backslash K_i}|f(\gamma)|^2 
< \varepsilon.
\end{align*}
If $x \notin s(K)$, then $G_x \cap K = \emptyset$, and so
\begin{align*}
\sum_{\gamma\in G_x}|f(\gamma) - g(\gamma)|^2 
&\leq \sum_{\gamma\in G_x}|f(\gamma)|^2 
= \sum_{\gamma\in G_x\backslash K}|f(\gamma)|^2 
< \varepsilon.
\end{align*}
One can similarly show that for all $x \in \go$
$$ \sum_{\gamma\in G^x}|f(\gamma) - g(\gamma)|^2 < \varepsilon. $$
Thus $\|f-g\|_2^2 <\varepsilon$ and so $f \in \overline{C_c(\Sigma|_U; U)}^{\|\cdot\|_2}$.
\end{proof}

\subsection{Length functions and the Schwartz space}
We will now recall the definition of the Schwartz space of $\Sigma \rightarrow G$ with respect to a length function $L$ and record some key properties.

\begin{definition}[{\cite{Hou2017}}]\label{lnfn}
Let $G$ be an \'{e}tale groupoid.
    A function $L \colon G \rightarrow [0, \infty)$ is a \emph{length function on G} if
    \begin{enumerate}
        \item $L(x)=0$ for all $x \in \GG$;
        \item $L(\gamma)=L(\gamma^{-1})$ for all $\gamma \in G$, i.e. $L$ is a symmetric function; and
        \item $L(\gamma_1 \gamma_2) \leq L(\gamma_1) + L(\gamma_2)$ for all $(\gamma_1,\gamma_2) \in G^{(2)}$, i.e. $L$ is subadditive.
    \end{enumerate}
\end{definition}

The following definition is adapted from \cite[Definition~2.9]{weygandt2023rapid}.

\begin{definition}\label{def: schwartz}
Let $\Sigma \rightarrow G$ be a twist, where $G$ is a locally compact \'{e}tale groupoid.
Let $L$ be a length function on $G$.
For each $p$ in the non-negative integers $\bZ_+$, define norms on $C_c(\Sigma; G)$ by
\begin{align*}
    &\|f\|_{2, p, s, L}= \sup_{x \in \GG}\big\{ \big(\sum_{\gamma  \in G_x} |f(\gamma)|^{2}(1+ L(\gamma))^{2p}\big)^{\frac{1}{2}}\big\},\\
    &\|f\|_{2. p, r, L}= \sup_{x \in \GG}\big\{ \big(\sum_{\gamma  \in G^x} |f(\gamma)|^{2}(1+ L(\gamma))^{2p}\big)^{\frac{1}{2}}\big\}, \text{ and}\\
& \|f\|_{2, p, L}= \max\{\|f\|_{2, p, s, L}, \|f\|_{2, p, r, L}\}\,.
\end{align*}
For each $p$ in $\bZ_+$, let $H^{2,L,p}(\Sigma; G)$ be the completion of $C_c(\Sigma; G)$ with respect to the norm $\|\cdot\|_{2, p, L}$.
We define the \emph{Schwartz space} $H^{2, L}(\Sigma; G)$ by
$$ H^{2,L}(\Sigma; G) = \bigcap_{p\in \bZ_+} H^{2,L,p}(\Sigma; G) \cap C_0(\Sigma; G).$$
That is, $H^{2,L}(\Sigma; G)$ is the completion of $C_c(\Sigma; G)$ by the norms $\|\cdot\|_{2, p, L}$ and $\|\cdot\|_{\infty}$ for all $p \in \bZ_+$.
\end{definition}

\begin{remark}
In \cite{weygandt2023rapid} the space $C_c(\Sigma; G)$ is defined to be the space of compactly supported, covariant functions $f \colon \Sigma \rightarrow G$.
By \cite[Section~2]{BrFuPiRe2021} that description is equivalent to the definition of $C_c(\Sigma; G)$ in terms of sections on the line bundle $L$ over $G$ given in this paper.
In \cite[Definition~2.9]{weygandt2023rapid}, the Schwartz space is thus described as a completion of functions on $\Sigma$, not sections on $G$ as we have done in Definition~\ref{def: schwartz}.
The two definitions are, however, equivalent.
\end{remark}

\begin{definition}
Let $\Sigma \rightarrow G$ be a twist, where $G$ is a locally compact \'{e}tale groupoid.
Let $L$ be a length function on $G$.
Let $U \subseteq G$. 
Define the \emph{Schwartz space over $U$} as
$$ H^{2,L}(\Sigma|_U;U) = \bigcap_{p\geq 1} \overline{C_c(\Sigma|_U;U)}^{\|\cdot\|_{2,p,L}} \cap C_0(\Sigma|_U; U).$$
\end{definition}

In keeping with our theme, we present the following corollary to Proposition~\ref{prop: ell2 support}, stating that elements of the Schwartz space in the closure of the compactly supported sections on their support.

\begin{proposition}\label{prop: schwartz supp}
Let $\Sigma \rightarrow G$ be a twist, where $G$ is a locally compact \'{e}tale groupoid. Let $L \colon G \rightarrow \bC$ be a continuous length function on $G$.

If $f \in H^{2,L,p}(\Sigma; G)$ with open support $U \subseteq G$,
then $f \in \overline{C_c(\Sigma|_U;U)}^{\|\cdot\|_{2,p,L}}$.
In particular, if $f \in H^{2,L}(\Sigma; G)$ with open support $U \subseteq G$,
then $f \in H^{2,L}(\Sigma|_U;U)$. 
\end{proposition}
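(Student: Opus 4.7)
The plan is to reduce the statement to the unweighted $\ell^2$ result, Proposition~\ref{prop: ell2 support}, by a change-of-weight trick. Set $\omega := (1+L)^{p}$, which by continuity of $L$ is a continuous function $G \to [1,\infty)$. For any section of the line bundle over $G$, pointwise multiplication by $\omega$ yields another section satisfying $|\omega h| = \omega |h|$. Directly from the definition of the weighted norm, for any continuous section $h$ one has
\[
\|h\|_{2,p,L} \;=\; \|\omega h\|_{2},
\]
and more generally $\|h_1 - h_2\|_{2,p,L} = \|\omega(h_1-h_2)\|_2$. This identity is the engine of the proof.

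Given $f \in H^{2,L,p}(\Sigma;G)$, I would first show that $F := \omega f$ belongs to $\ell^2(\Sigma;G)$ with $\supp(F) = \supp(f) = U$. By density of $C_c(\Sigma;G)$ in $H^{2,L,p}(\Sigma;G)$, choose $f_n \in C_c(\Sigma;G)$ with $\|f - f_n\|_{2,p,L} \to 0$. Since $\omega \geq 1$ we have $\|\cdot\|_\infty \leq \|\cdot\|_{2,p,L}$, so the completion embeds into $C_0(\Sigma;G)$ and $f$ is realized as a continuous section for which $\omega f$ is well-defined pointwise. Setting $F_n := \omega f_n \in C_c(\Sigma;G)$ (with the same compact supports as the $f_n$), the identity gives $\|F - F_n\|_2 = \|f - f_n\|_{2,p,L} \to 0$, so $F \in \ell^2(\Sigma;G)$. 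Because $\omega > 0$ everywhere, $\supp(F) = \supp(f) = U$.

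Next I would apply Proposition~\ref{prop: ell2 support} to obtain $g_n \in C_c(\Sigma|_U; U)$ with $\|F - g_n\|_2 \to 0$, and set $h_n := g_n / \omega$. Because $\omega$ is continuous and bounded below by $1$, each $h_n$ is a continuous section of the line bundle, and $\supp(h_n) \subseteq \supp(g_n) \subseteq U$ is compact, so $h_n \in C_c(\Sigma|_U; U)$. One final application of the identity yields
\[
\|f - h_n\|_{2,p,L} \;=\; \|\omega(f - h_n)\|_2 \;=\; \|F - g_n\|_2 \;\longrightarrow\; 0,
\]
which proves $f \in \overline{C_c(\Sigma|_U; U)}^{\|\cdot\|_{2,p,L}}$. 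The ``in particular'' statement follows by applying this for every $p \in \bZ_+$ and noting that $f \in C_0(\Sigma;G)$ with $\supp(f) \subseteq U$ restricts to an element of $C_0(\Sigma|_U; U)$.

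I do not anticipate any substantive obstacle: the argument is essentially a rescaling by $\omega$ that converts the weighted problem into the unweighted one, where Proposition~\ref{prop: ell2 support} is already available. The only mild bookkeeping point is ensuring that $f$ is genuinely a continuous section of the line bundle so that pointwise multiplication and division by $\omega$ make sense, and this is handled by the inequality $\|\cdot\|_\infty \leq \|\cdot\|_{2,p,L}$ together with the canonical embedding $H^{2,L,p}(\Sigma;G) \hookrightarrow C_0(\Sigma;G)$.
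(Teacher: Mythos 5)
Your argument is correct and is essentially the paper's own proof: the paper's one-line justification is precisely that $f(1+L)^p \in \ell^2(\Sigma;G)$, so the weighted statement reduces to Proposition~\ref{prop: ell2 support}, and your write-up just makes the rescaling isometry $\|h\|_{2,p,L} = \|(1+L)^p h\|_2$ and the division step explicit. No gaps.
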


\begin{proof}
The result follows immediately from Proposition~\ref{prop: ell2 support} on noting that if $f \in H^{2,L,p}(\Sigma;G)$ then $f(1+L)^p \in \ell^2(\Sigma;G)$.
\end{proof}

\begin{definition}[{\cite[Definition~2.10]{weygandt2023rapid}}]
    Let $G$ be an \'etale groupoid and let $L \colon G \rightarrow [0,\infty)$ be a length function on $G$.
    Let $\Sigma \rightarrow G$ be a twist over $G$.
    The twist $\Sigma \rightarrow G$ is said to have the \emph{rapid decay property} with respect to a length function $L$ if there exists a constant $c > 0$ and a non-zero integer $p \in \bZ_+$ such that
\begin{align}
\label{rdpdef}
    \|f\|_{r} \leq c \|f\|_{2, p, L}&\text{ for all $f \in C_c(\Sigma; G)$}\,.
    \end{align}
\end{definition}

In \cite[Proposition~2.12]{weygandt2023rapid} it is shown that the rapid decay property does not depend on the specific twist $\Sigma$ on $G$.
That is, $\Sigma \rightarrow G$ has the rapid decay property with respect to a length function $L$, if and only if $G$ has the rapid decay property with respect to $L$ in the sense of \cite[Definition~3.2]{Hou2017}.

By \cite[Proposition~2.13]{weygandt2023rapid}, if a twist $\Sigma \rightarrow G$ has the rapid decay property with respect to a length function $L$ then $H^{2,L}(\Sigma; G) \subseteq C_r^{\ast}(\Sigma; G)$.
Further, if $L$ is also continuous, then the Schwartz space $H^{2, L}(\Sigma; G)$ is a dense Fr\'echet $*$-subalgebra of the reduced groupoid $C^{\ast}$- algebra $C_r^{\ast}(\Sigma; G)$.
In this case, by minor adjustments to \cite[Proposition~3.4]{Hou2017}, $G$ having the rapid decay property with respect to $L$ is equivalent to there being a $p \in \bZ_+$ and a constant $c>0$ such that
$$ \|f\|_{r} \leq c \|f\|_{2, p, L}$$
for all $f$ in $H^{2,L,p}(\Sigma; G)$.

We can now present the main theorem of this section, which says that if $G$ has the rapid decay property with respect to some length function, then there is a dense subalgebra of $C_r^*(\Sigma;G)$ for which there is an affirmative answer to Question~\ref{question}.
That is, for a dense subalgebra we can recover elements from the compactly supported sections on their supports.
The dense algebra in question is, of course, the Schwartz space $H^{2,L}(\Sigma;G)$.
This Theorem is a generalization of \cite[Theorem~1.1]{BedCon2009} for groups with the rapid decay property.

\begin{theorem}\label{thm: rdp supp}
Let $G$ be a locally compact Hausdorff \'{e}tale groupoid and let $\Sigma \rightarrow G$ be a twist of $G$.
Suppose $L$ is a continuous length function on $G$ for which $G$ has the rapid decay property.
If $f \in H^{2,L}(\Sigma;G) \subseteq C_r^*(\Sigma;G)$ and $\supp(f) = U$, then
$f \in \overline{C_c(\Sigma|_U;U)}^{\|\cdot\|_r}$.
\end{theorem}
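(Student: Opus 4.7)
The proof should essentially be a two-line assembly of results already in hand: the approximation statement from Proposition~\ref{prop: schwartz supp}, which gives approximation by $C_c(\Sigma|_U;U)$ in the norm $\|\cdot\|_{2,p,L}$, combined with the rapid decay inequality, which lets us upgrade $\|\cdot\|_{2,p,L}$-convergence to $\|\cdot\|_r$-convergence. The plan is as follows.

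First, invoke the rapid decay property: choose $p \in \bZ_+$ and $c > 0$ so that $\|h\|_r \leq c \|h\|_{2,p,L}$ for all $h \in C_c(\Sigma;G)$. By the extension of this inequality to $H^{2,L,p}(\Sigma;G)$ recorded in the paragraph preceding the theorem (using the continuity of $L$ and the minor adjustment of \cite[Proposition~3.4]{Hou2017}), the same estimate holds for every $h \in H^{2,L,p}(\Sigma;G)$.

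Next, note that $f \in H^{2,L}(\Sigma;G) = \bigcap_{q \in \bZ_+} H^{2,L,q}(\Sigma;G) \cap C_0(\Sigma;G)$, so in particular $f \in H^{2,L,p}(\Sigma;G)$. Since $\supp(f) = U$, Proposition~\ref{prop: schwartz supp} produces a sequence $(g_n)_{n \in \bN} \subseteq C_c(\Sigma|_U;U)$ with $\|f - g_n\|_{2,p,L} \to 0$ as $n \to \infty$.

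Finally, each difference $f - g_n$ lies in $H^{2,L,p}(\Sigma;G)$, so the rapid decay inequality yields
\[
\|f - g_n\|_r \leq c \|f - g_n\|_{2,p,L} \xrightarrow{n \to \infty} 0,
\]
and hence $f \in \overline{C_c(\Sigma|_U;U)}^{\|\cdot\|_r}$, as required.

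There is no real obstacle here; the only subtlety worth double-checking is that the rapid-decay inequality is genuinely available on all of $H^{2,L,p}(\Sigma;G)$ rather than just on $C_c(\Sigma;G)$, since the differences $f - g_n$ are only Schwartz-class and not compactly supported. This is exactly the content of the $H^{2,L,p}$-version of the rapid decay estimate cited just before the theorem, so the argument goes through.
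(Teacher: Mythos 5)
Your proposal is correct and follows essentially the same route as the paper's proof: apply Proposition~\ref{prop: schwartz supp} to approximate $f$ in $\|\cdot\|_{2,p,L}$ by elements of $C_c(\Sigma|_U;U)$, then use the rapid decay inequality to convert this to $\|\cdot\|_r$-approximation. The subtlety you flag---that the inequality must hold on $H^{2,L,p}(\Sigma;G)$ and not just on $C_c(\Sigma;G)$, since $f-g_n$ is not compactly supported---is real and is handled exactly as you say, via the extension recorded before the theorem; the paper's own proof uses this silently.
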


\begin{proof}
    As $G$ has the rapid decay property with respect to $L$ we that $H^{2,L}(\Sigma;G) \subseteq C_r^*(\Sigma;G)$.
    Further, there exists $p \in \bZ_+$ and a constant $c>0$ such that $\|g\|_r \leq c\|g\|_{2,p,L}$ for all $g \in C_c(\Sigma;G)$.

    Take any $f \in H^{2,L}(\Sigma;G)$ and let $U = \supp(f)$ be its open support.
    Fix $\varepsilon > 0$.
    By Proposition~\ref{prop: schwartz supp}, there exists $g \in C_c(\Sigma|_U;U)$ such that 
    $$ \|f -g\|_{2,p,L} < \frac{\varepsilon}{c}.$$
    Hence $\|f-g\|_r < \varepsilon$, completing the result.
\end{proof}

\subsection{Conditionally negative definite length functions}
We wish to strengthen Theorem~\ref{thm: rdp supp} from the dense subalgebra $H^{2,L}(\Sigma;G) \subseteq C_r^*(\Sigma;G)$ to all of $C_r^*(\Sigma;G)$.
To do this we will need extra conditions on the length function $L$.
We will do this in two (related) cases, in Theorem~\ref{thm: negative RDP} and Theorem~\ref{thm: main}.
We first discuss a class of multipliers on $C_r^*(\Sigma;G)$.

Let $\Sigma \rightarrow G$ be a twist with $G$ a locally compact Hausdorff \'etale groupoid.
We denote the set of compactly supported complex valued continuous functions and bounded complex valued continuous functions by $C_c(G)$ and $C_b(G)$, respectively. 
Henceforth, given $k \in C(G)$, $f \in C_c(\Sigma; G)$, we define
$kf \in C_c(\Sigma; G)$ to be the point-wise product.
That is, for all $\gamma \in G$,
\begin{align*}
kf(\gamma):=k(\gamma)f(\gamma).
\end{align*} 

\begin{definition}\label{pd}
A continuous function $k \colon G\rightarrow \bC$ is called a \emph{positive-definite} function on $G$ if for any $x$ in $\GG$ and a finite set $F \subset G_x$, the matrix 
\begin{align*}
    [k(\gamma \eta^{-1})]_{\eta, \gamma \in F} \in M_{F}(\bC)
\end{align*}
is positive-definite.
\end{definition}

Versions of the following result can be found in varying amounts of generality, e.g. in \cite[Proposition~5.6.16]{BroOza2008} and \cite[Lemma~4.2]{Tak}.
The strongest and most general version can be found in \cite[Proposition~3.6]{MR4404070}.

\begin{proposition}[{\cite[Proposition~3.6]{MR4404070}}]\label{cpm}
    If $k \in C(G)$ is a positive-definite bounded complex valued function, 
    then $M_{k} \colon C_c(\Sigma; G) \ni f \mapsto kf \in C_c(\Sigma; G)$ extends to a completely positive map on $C_r^{\ast}(\Sigma; G)$ and 
     \begin{align*}
        \|M_k\| = \sup _{x \in \GG} |k(x)|=\sup_{g \in G}|k(\gamma)|.
    \end{align*}
\end{proposition}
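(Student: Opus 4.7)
The plan is a Stinespring-type dilation for $M_k$ coming from a GNS-type construction for $k$, adapting the classical argument for positive-definite functions on groups to the twisted groupoid setting.

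First I would record the basic consequences of positive-definiteness. Taking $F=\{x\}\subseteq G_x$ in Definition~\ref{pd} yields $k(x)\geq 0$ for every $x\in\GG$, and applying hermiticity together with $2\times 2$-positivity (on $F=\{x,\gamma\}\subseteq G_x$ with $x=s(\gamma)$) gives the Cauchy--Schwarz-type estimate $|k(\gamma)|^2\leq k(r(\gamma))k(s(\gamma))$ for every $\gamma\in G$. In particular $\sup_{\gamma\in G}|k(\gamma)|=\sup_{x\in\GG}k(x)$, which is the second equality in the norm formula and reduces the task to showing that $M_k$ is completely positive on $C_r^*(\Sigma;G)$ with norm exactly $\sup_{x\in\GG}k(x)$.

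For each $x\in\GG$, construct a Hilbert space $\mathcal{H}_x$ as the completion of the finitely supported sections of the line bundle over $G_x$ modulo the null-space of the positive semi-definite sesquilinear form $\langle\delta_\eta,\delta_{\eta'}\rangle:=k(\eta(\eta')^{-1})$. Writing $\xi_x$ for the image of the delta at the unit $x$, one has $\|\xi_x\|^2=k(x)$, and left multiplication in $\Sigma$ assembles into a $\bT$-equivariant unitary field $\pi(\sigma)\colon\mathcal{H}_{s(q(\sigma))}\to\mathcal{H}_{r(q(\sigma))}$ satisfying $\langle\pi(\sigma)\xi_{s(q(\sigma))},\xi_{r(q(\sigma))}\rangle = k(q(\sigma))$ up to the scalar by which $\sigma$ lifts $q(\sigma)$. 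Now, for each $x\in\GG$, define a bounded operator $V_x\colon\ell^2(\Sigma_x;G_x)\to\ell^2(\Sigma_x;G_x)\otimes\mathcal{H}_x$ by $V_x\delta_\gamma=\delta_\gamma\otimes\pi(\gamma)^{-1}\xi_{r(\gamma)}$ for $\gamma\in G_x$ (with the obvious interpretation on the line bundle); the vectors $V_x\delta_\gamma$ are pairwise orthogonal with norms $\sqrt{k(r(\gamma))}$, so $\|V_x\|^2\leq\sup_y k(y)$.

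The core computation, unwinding the convolution formula~\eqref{eq: conv product} and using the multiplicative identity $\pi(\eta)\pi(\gamma\eta)^{-1}=\pi(\gamma)^{-1}$ together with $\langle\pi(\gamma)\xi_{s(\gamma)},\xi_{r(\gamma)}\rangle=k(\gamma)$, yields
\begin{align*}
V_x^*\bigl(\lambda_x(f)\otimes 1_{\mathcal{H}_x}\bigr)V_x=\lambda_x(M_kf)\qquad\text{for all }f\in C_c(\Sigma;G).
\end{align*}
This exhibits $\lambda_x\circ M_k$ as a compression of the $*$-representation $\lambda_x(\cdot)\otimes 1_{\mathcal{H}_x}$ by the bounded operator $V_x$, so $M_k$ is completely positive of norm at most $\sup_x k(x)$. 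For the reverse inequality, evaluate $M_k$ on a non-negative bump $e\in C_c(G)$ supported in a bisection neighbourhood of a chosen $x_0\in\GG$ with $e(x_0)=1$ and $\|e\|_r$ close to $1$; this gives $\|M_k\|\geq k(x_0)$ for every such $x_0$, closing the circle.

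The main obstacle is notational rather than conceptual: one must carefully verify that $\pi$ is $\bT$-equivariant so that $V_x$ truly descends to the line-bundle picture used for $C_r^*(\Sigma;G)$, and that the field $\{\mathcal{H}_x\}_{x\in\GG}$ is assembled sufficiently continuously that the fibrewise bounds glue to the reduced $C^*$-norm. In the full generality of Fell bundles over groupoids these technicalities are treated in \cite[Proposition~3.6]{MR4404070}, which is the reference cited in the proposition.
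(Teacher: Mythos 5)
The paper does not prove this proposition at all: it is quoted verbatim from Kwa\'sniewski--Li--Skalski \cite[Proposition~3.6]{MR4404070}, with the preceding sentence pointing to \cite[Proposition~5.6.16]{BroOza2008} and \cite[Lemma~4.2]{Tak} for less general versions. Your dilation argument is, in substance, the standard proof that underlies all of those references, and it is correct: the $1\times1$ and $2\times2$ positivity give $k(x)\ge 0$ and $|k(\gamma)|^2\le k(r(\gamma))k(s(\gamma))$, whence the second equality in the norm formula; the fibrewise GNS space for the kernel $(\eta,\eta')\mapsto k(\eta(\eta')^{-1})$ on $G_x$ together with the isometry-up-to-$\sqrt{\sup k}$ operator $V_x$ realizes $\lambda_x\circ M_k$ as a compression of $\lambda_x(\cdot)\otimes 1$, and the lower bound on the norm follows by evaluating on a bump in $C_c(\GG)$ (where the reduced and sup norms agree). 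Two small remarks. First, the intertwiners $\pi(\gamma)\colon\mathcal H_{s(\gamma)}\to\mathcal H_{r(\gamma)}$ are induced by \emph{right} multiplication by $\gamma^{-1}$ (which maps $G_{s(\gamma)}$ to $G_{r(\gamma)}$ and preserves the kernel), not left multiplication; and since $k$ is a function on $G$ rather than on $\Sigma$, the auxiliary field $\{\mathcal H_x\}$ carries no twist at all, so the $\bT$-equivariance you flag is automatic. Second, the continuity of the field $\{\mathcal H_x\}$ is not actually needed: the reduced norm is the supremum of the fibrewise operator norms, and complete positivity and the bound $\|\lambda_x(M_k f)\|\le(\sup_y k(y))\,\|\lambda_x(f)\|$ are verified fibre by fibre, so the estimates pass to $C_r^*(\Sigma;G)$ with no gluing argument. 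So the only genuine work beyond your sketch is the convention-chasing you already identify, and the proof goes through.
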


\begin{theorem}\label{thm: mult approx}
Let $G$ be a locally compact Hausdorff \'{e}tale groupoid, and let $\Sigma \rightarrow G$ be a twist.
Suppose $(k_i)_{i \in I}$ is a uniformly bounded net of positive-definite complex valued continuous functions on $G$ that converges uniformly to $1$ on compact subsets of $G$.
If $f \in C_r^{\ast}(\Sigma; G)$, then $\|M_{k_i}(f)-f\|_{r} \rightarrow 0$.
\end{theorem}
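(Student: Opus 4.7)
The plan is a classical three-$\varepsilon$ approximation argument: uniform boundedness of the multipliers combined with density of $C_c(\Sigma;G)$ reduces the problem to compactly supported sections, and the \'etale hypothesis then handles convergence on a fixed compact support.

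First I would observe that, by Proposition~\ref{cpm} and the uniform boundedness of $(k_i)_{i \in I}$, the family of multipliers is uniformly bounded with $M := \sup_i \|M_{k_i}\| = \sup_i \sup_{\gamma \in G} |k_i(\gamma)| < \infty$. Given $f \in C_r^*(\Sigma;G)$ and $\varepsilon > 0$, I would pick $g \in C_c(\Sigma;G)$ with $\|f - g\|_r < \varepsilon/(M + 2)$ by density, so that
\begin{align*}
\|M_{k_i}(f) - f\|_r
&\leq \|M_{k_i}(f - g)\|_r + \|M_{k_i}(g) - g\|_r + \|g - f\|_r \\
&\leq (M + 1)\|f - g\|_r + \|M_{k_i}(g) - g\|_r.
\end{align*}
This reduces the theorem to showing $\|M_{k_i}(g) - g\|_r \to 0$ for each fixed $g \in C_c(\Sigma;G)$.

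Now fix such a $g$ and set $K = \supp(g)$, which is compact. The section $M_{k_i}(g) - g = (k_i - 1)g$ is continuous and again supported in $K$, and $(k_i - 1)|_K \to 0$ uniformly by the hypothesis on $(k_i)$. Because $G$ is \'etale and Hausdorff, $K$ can be covered by finitely many open bisections, which yields a constant $N_K \in \bN$ such that $|G_x \cap K| \leq N_K$ and $|G^x \cap K| \leq N_K$ for all $x \in \GG$. Combined with the standard inequality $\|h\|_r \leq \|h\|_I$ for the $I$-norm on groupoid C$^*$-algebras, this gives
$$\|h\|_r \leq N_K \|h\|_\infty \text{ for every } h \in C_c(\Sigma;G) \text{ supported in } K.$$
Applying this to $h = (k_i - 1)g$ yields $\|M_{k_i}(g) - g\|_r \leq N_K \|(k_i-1)|_K\|_\infty \|g\|_\infty \to 0$.

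The argument is essentially routine; the one point deserving any care is the passage from uniform convergence on $K$ to convergence in $\|\cdot\|_r$, which rests on the two standard ingredients above: the bound $\|\cdot\|_r \leq \|\cdot\|_I$ for groupoid C$^*$-algebras and the uniform finiteness of the fiber intersections $G_x \cap K$ and $G^x \cap K$ in an \'etale groupoid. Neither ingredient depends on the twist $\Sigma$, so the proof for the twisted case is no harder than the untwisted one.
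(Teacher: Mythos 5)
Your proof is correct, and its overall skeleton is the same three-$\varepsilon$ argument as the paper's: use Proposition~\ref{cpm} to get the uniform bound $\sup_i\|M_{k_i}\| = \sup_i\|k_i\|_\infty = M$, approximate $f$ by some $g \in C_c(\Sigma;G)$ in $\|\cdot\|_r$, and split $\|M_{k_i}(f)-f\|_r$ into three terms. The one place you diverge is the middle term: the paper simply cites \cite[Lemma~4.5]{MR4404070} for the fact that $\|M_{k_i}(g)-g\|_r \to 0$ for fixed $g \in C_c(\Sigma;G)$, whereas you prove it directly. Your argument for that step is sound: if $K$ is the compact support of $g$, then $(k_i-1)g$ is supported in $K$, and covering $K$ by finitely many open bisections gives a uniform bound $|G_x\cap K|,\,|G^x\cap K|\le N_K$, whence $\|h\|_r\le\|h\|_I\le N_K\|h\|_\infty$ for sections supported in $K$; uniform convergence of $k_i\to 1$ on $K$ then finishes it. This makes your proof self-contained where the paper's is not, at the modest cost of a few extra lines; both routes are standard and neither depends on the twist. (A cosmetic remark: in the paper's conventions $\supp(g)$ denotes the open set where $g\neq 0$, so you should say ``let $K$ be a compact set containing the support of $g$'' rather than $K=\supp(g)$; this does not affect the argument.)
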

\begin{proof}
Fix $\varepsilon >0$. 
Let $M$ be a uniform bound for the net $(k_i)_{i\in I}$.
That is, $\sup_{i \in I} \|k_i\|_{\infty} \leq M$.
By the definition of $C_r^{\ast}(\Sigma; G)$, there exists a sequence $(f_n)_{n \in \bN} \in C_c(\Sigma; G)$ such that $\|f_n-f\|_r \to 0$ as $n \to \infty$.
Thus there exists $N \in \bN$ such that for all $n \geq N$, we have
\begin{align*}
    \|f_n-f\|_{r} < \frac{\varepsilon}{3M}\,.
\end{align*}
By the triangle inequality, we obtain
\begin{align*}
    \|M_{k_i}(f)-f\|_{r} \leq \|M_{k_i}(f-f_{N})\|_{r}+\|M_{k_i}(f_{N})-f_{N}\|_{r}+\|f_{N}-f\|_{r}\,.
\end{align*}
Since $\|M_{k_i}\| = \|k_i\|_\infty$ by Proposition~\ref{cpm}, we have
\begin{align*}
    \|M_{k_i}(f-f_{N})\|_{r} &\leq  \|M_{k_{i}}\|\|f-f_{N}\|_{r}\\
    &\leq M\|f-f_{N}\|_{r} 
    < \frac{\varepsilon}{3}.
\end{align*}
By \cite[Lemma~4.5]{MR4404070}, $\lim_{i \in I}\|M_{k_i}(g)-g\|_{r}=0$ for all $g \in C_c(\Sigma;G)$.
Hence, there exists $i_0 \in I$ such that 
\begin{align*}
    \|M_{k_i}(f_{N})-f_{N}\|_{r} < \frac{\varepsilon}{3} &\text{ for all } i \geq i_0\,.
\end{align*}
Hence, for all $i \geq i_0$ we have
\begin{align*}
    \|M_{k_i}(f)-f\|_{r} < \varepsilon,
\end{align*}
proving the result.
\end{proof}

Recall that a function $\psi\colon G \rightarrow \bC$ is \emph{conditionally negative definite} if for each $x \in \GG$ and $\gamma_1,\ldots,\gamma_n \in G_x$ given $\lambda_1,\ldots,\lambda_n \in \bR$ such that $\sum_i \lambda_i =0$ we have that
$$ \sum_{i,j=1}^n\lambda_i\lambda_j \psi(\gamma_i\gamma_j^{-1})\leq 0.$$
The following theorem gives a groupoid analogue of \cite[Corollary~6.5]{BedCon2015}.

\begin{theorem}\label{thm: negative RDP}
    Let $G$ be locally compact Hausdorff \'{e}tale groupoid, and let $\Sigma \rightarrow G$ be a twist.
    Suppose there is a conditionally negative definite length function $L$ on $G$ such that $G$ has the rapid decay property with respect to $L$.
    
    Take any $f \in C_r^*(\Sigma; G)$ and let $U$ be the open support of $f$.
    Then $f \in \overline{C_c(\Sigma|_U;U)}^{\|\cdot\|_r}$.
\end{theorem}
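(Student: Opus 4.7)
My plan is to approximate $f$ in the reduced norm by elements that lie in the Schwartz space $H^{2,L}(\Sigma;G)$ and that have the \emph{same} open support $U$, so that I can then invoke Theorem~\ref{thm: rdp supp}. The natural candidate for the approximation comes from the Schoenberg-type functions $k_t := e^{-tL}$ associated with the conditionally negative definite length function $L$.

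First, I would use (a groupoid version of) Schoenberg's theorem to show that for every $t > 0$ the function $k_t = e^{-tL} \colon G \to \bC$ is continuous, bounded by $1$, and positive definite in the sense of Definition~\ref{pd}. Positive definiteness is a fiberwise condition, as is conditional negative definiteness, so the standard scalar argument (exponentiating a negative definite matrix on each fiber $G_x$ via the power series identity) goes through unchanged. Assuming $L$ is continuous (which is needed for $k_t$ to be a legitimate multiplier via Proposition~\ref{cpm}), we also have $k_t \to 1$ uniformly on compacts as $t \to 0^+$, since $|e^{-tL(\gamma)} - 1| \leq tL(\gamma)$ and $L$ is bounded on any compact set. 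Theorem~\ref{thm: mult approx} then applies to the net $(k_t)_{t > 0}$ and yields
\begin{equation*}
\|M_{k_t}(f) - f\|_r \longrightarrow 0 \quad \text{as } t \to 0^+.
\end{equation*}

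Next, I would verify that $k_t f = M_{k_t}(f) \in H^{2,L}(\Sigma;G)$ for every $t > 0$. The key estimate is that the scalar function $s \mapsto e^{-2ts}(1+s)^{2p}$ is bounded on $[0,\infty)$ by a constant $C_{t,p}$, so
\begin{equation*}
\|k_t f\|_{2,p,L}^2 \leq C_{t,p} \|f\|_2^2 \leq C_{t,p}\|f\|_r^2 < \infty.
\end{equation*}
Choosing $f_n \in C_c(\Sigma;G)$ with $\|f_n - f\|_r \to 0$, the same bound shows $k_t f_n \to k_t f$ in $\|\cdot\|_{2,p,L}$, so $k_t f$ lies in the completion $H^{2,L,p}(\Sigma;G)$ for every $p \in \bZ_+$; combined with $k_t f \in C_0(\Sigma;G)$ (from $f \in C_0(\Sigma;G)$ and boundedness of $k_t$), this places $k_t f$ in the Schwartz space. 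Crucially, because $L$ is real-valued and $\geq 0$, we have $k_t(\gamma) = e^{-tL(\gamma)} > 0$ for every $\gamma \in G$, so $\supp(k_t f) = \supp(f) = U$.

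The finish is immediate: applying Theorem~\ref{thm: rdp supp} to the Schwartz-space element $k_t f$, whose support is precisely $U$, gives $k_t f \in \overline{C_c(\Sigma|_U;U)}^{\|\cdot\|_r}$. Since this set is norm-closed and $k_t f \to f$ in $\|\cdot\|_r$, the result $f \in \overline{C_c(\Sigma|_U;U)}^{\|\cdot\|_r}$ follows. I expect the main obstacle to be the first step: carefully confirming that the Schoenberg implication carries over to the groupoid setting and that the functions $k_t$ actually satisfy Definition~\ref{pd} (and are continuous, bounded, and converge uniformly on compacts to $1$). Once this groupoid Schoenberg statement is in hand, everything else is a straightforward combination of the already-established Theorem~\ref{thm: mult approx} and Theorem~\ref{thm: rdp supp}, together with the observation that pointwise multiplication by a strictly positive function does not shrink the support.
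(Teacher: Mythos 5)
Your proposal is correct and follows essentially the same route as the paper's own proof: both use Schoenberg's theorem to produce the positive-definite multipliers $e^{-tL}$, show via the boundedness of $s \mapsto e^{-2ts}(1+s)^{2p}$ that $M_{k_t}$ maps $C_r^*(\Sigma;G)$ into the Schwartz space, and then combine Theorem~\ref{thm: mult approx} with Theorem~\ref{thm: rdp supp}. Your explicit observations that $k_t > 0$ everywhere (so the support is preserved exactly) and that continuity of $L$ is tacitly needed are minor refinements of the same argument.
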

\begin{proof}
    Define a sequence of functions $k_n(\gamma)=e^{-\frac{L(\gamma)}{n}} \in C(G)$ for all $n \in \bN$. 
    Each $k_n$ is positive definite since $L$ is a conditionally negative definite, by e.g. \cite[Chapter~3, Theorem~2.2]{BerChrRes1984}.  Further, the sequence $(k_n)_n$ converges uniformly to $1$ on compact subsets of $G$, (see the proof of \cite[Proposition~5.4]{MR4404070}). 
    Moreover, as $e^{-x}(1+x)^{s} \rightarrow 0$ as $x \rightarrow \infty$ for any positive $s \in \mathbb{R}$, we have
    \begin{align*}
    \sup_{\gamma \in G} |k_n(\gamma)|(1+L(\gamma))^{p}:=B_n < \infty\,.
    \end{align*}
    Let $f \in C_r^{\ast}(\Sigma; G)$, and let $U = \supp(f)$.
    Fix any $p \in \bN$. 
    Then
    \begin{align*}
   \sup_{x \in \GG} \sum_{{\gamma} \in G_x} |f({\gamma})|^{2}|k_n({\gamma})|^{2}(1+L({\gamma}))^{2p}
    &\leq B_n^2 \sup_{x \in \GG} \sum_{{\gamma} \in G_x} |f({\gamma})|^2\\
    &\leq B_n^2 {\|f\|_{2}^{2}}
    \leq B_n^{2} \|f\|_r^2.
    \end{align*}
    Similarly, 
    \begin{align*}
         \sup_{x \in \GG} \sum_{{\gamma} \in G_x} |f({\gamma})|^{2}|k_n({\gamma})|^{2}(1+L({\gamma}))^{2p} \leq B_n^2 \|f\|_r^2.
    \end{align*}
    Hence, for each $n$,
    \begin{align}\label{normboundinggn}
        \|M_{k_n}(f)\|_{2,p,L} \leq B_n \|f\|_r < \infty\,.
    \end{align}
  Note also that $\|M_{k_n}(f)\|_{\infty} \leq \|f\|_{\infty}$.    
  Let $C_c(\Sigma; G) \ni \varphi_k \rightarrow f$ as $k \rightarrow \infty$ in the reduced norm $\|\cdot\|_r$. 
  By \eqref{normboundinggn}, for each $n$,
     \begin{align*}\label{mltapxeqn}
      & \|M_{k_n}(\varphi_k)-M_{k_n}(f)\|_{2,p.L} \leq B_n\|\varphi_k-f\|_{r}\,.
  \end{align*}
  Consequently, we have $M_{k_n}(f) \in H^{2,L}(\Sigma; G)$ for all $n$.

  As $G$ has the rapid decay property with respect to $L$, $H^{2,L}(\Sigma; G) \subseteq C_r^{\ast}(\Sigma; G)$, and hence $M_{k_n}(f) \in C_r^{\ast}(\Sigma; G)$.
  Further $\supp(M_{k_n}f) \subseteq \supp(f) = U$.

  By Theorem~\ref{thm: mult approx}, we know $M_{k_n}(f) \rightarrow f$ in $C_r^{\ast}(\Sigma; G)$ as $n \to \infty$.
    Fix $\varepsilon > 0$. 
    There exists $n$ such that 
    $$\|M_{k_n}(f) - f \|_r < \frac{\varepsilon}{2}.$$
  By Theorem~\ref{thm: rdp supp}, there exists $g \in C_c(\Sigma|_U;U)$ such that
  $$\|M_{k_n}(f) - g \|_r < \frac{\varepsilon}{2}. $$
  Hence,
  \begin{align*}
  \|f-g\|_r &\leq \|f - M_{k_n}(f)\|_r + \|M_{k_n}(f) - g\|_r 
  < \varepsilon.
  \end{align*}
  Thus $f \in \overline{C_c(\Sigma|_U;U)}^{\|\cdot\|_r}$.
\end{proof}

Many groups satisfy the conditions of Theorem~\ref{thm: negative RDP}, including free groups, and finitely generated Coxeter groups \cite[Theorem~5]{NibRee2003}.
We refer the reader to the final paragraph of \cite{BroNib2004} for a longer list of such groups.

In \cite[Section~4]{weygandt2023rapid}, several permanence conditions are given for the rapid decay property.
We highlight one such condition here.
Let $G$ be an \'{e}tale Hausdorff groupoid satisfying the rapid decay property with respect to a length function $L$, and let $H$ be any compact groupoid.
It is shown in \cite[Proposition~4.2]{weygandt2023rapid} that the groupoid $G\times H$ has the rapid decay property with respect to the length function $\hat{L}$, defined by $\hat{L}(\gamma,\eta)=L(\gamma)$.
It is not hard to see that if $L$ is conditionally negative definite, then so is $\hat{L}$.
Thus, if $G$ satisfies the conditions of Theorem~\ref{thm: main}, then so will $G \times H$.

\section{The Haagerup property and a related length function}\label{sec: haagerup}
The Haagerup property for locally compact Hausdorff \'{e}tale groupoids was introduced in \cite{MR4404070}.
This is used in \cite{MR4404070} to study C$^*$-algebras of Fell bundles over groupoids satisfying the Haagerup property. 
We will show that (a stronger condition than) the Haagerup approximation property of \cite{MR4404070} can be used to construct a length function.
Our main theorem, Theorem~\ref{thm: main}, involves twists which have the rapid decay property with respect to this length function.

\begin{definition}[{\cite[Proposition~5.4]{MR4404070}}]\label{ndcf}
    A \emph{locally proper negative type} function on an \'etale locally compact Hausdorff groupoid $G$ is a continuous function $\psi: G \rightarrow \mathbb{R}$ satisfying the following:
    \begin{enumerate}
        \item $\psi$ is \emph{normalized}, i.e. $\psi|_{\GG}=0$;
        \item $\psi$ is \emph{symmetric}, i.e. $\psi(\gamma)=\psi(g^{-1})$  for all $g \in G$;
        \item $\psi$ is conditionally negative definite;
         \item $\psi$ is \emph{locally proper}, i.e. the function $(\psi, r, s) \colon G \rightarrow \mathbb{R} \times \GG \times \GG$ is proper\,.
    \end{enumerate}
\end{definition}

\begin{definition}[{\cite[Definition~5.6]{MR4404070}}]\label{HAP}
Let $G$ be a \emph{locally compact \'etale} groupoid. 
We say the groupoid $G$ has the \emph{Haagerup property} if there exists a net $(k_i)_{i \in I}$ of positive-definite functions on $G$ such that
\begin{enumerate}
    \item each $k_i$ is normalized, i.e., $k_i|_{\GG}=1$;
    \item each $k_i$ is a locally $C_0$-function, i.e. for each compact set $K \subset \GG$, we have $k_i|_{G_{K}^{K}} \in C_0(G_{K}^{K})$ where $G_{K}^{K}:=r^{-1}(K) \cap s^{-1}(K)$\;
    \item the net $(k_i)_{i \in I}$ converges to $1$ uniformly on compact subsets of $G$\,.
\end{enumerate}
We say that $G$ has the \emph{sequential Haagerup property} if the net $(k_i)_{i \in I}$ can be replaced by a sequence.
\end{definition}

\begin{remark}
In \cite[Definition~5.6]{MR4404070}, the Haagerup property is defined for Fell bundles over \'{e}tale groupoids. 
Though we are primary focus of study is line bundles over groupoids, we only present the definition of the Haagerup approximation property for groupoids (without any bundle structure).
This is sufficient by \cite[Lemma~5.10]{MR4404070}: a twist $\Sigma \rightarrow G$ satisfies \cite[Definition~5.6]{MR4404070} if and only if $G$ satisfies Definition~\ref{HAP}.
\end{remark}

Definition~\ref{ndcf} and Definition~\ref{HAP} are related by the following Proposition.

\begin{proposition}[{cf.~\cite[Proposition~5.4]{MR4404070}}]\label{prop: HAP equivalence}
Let $G$ be an \'{e}tale locally compact Hausdorff groupoid.
Consider the following properties
\begin{enumerate}
\item there is a non-zero locally proper negative type function on $G$;
\item $G$ has the sequential Haagerup property.
\end{enumerate}
Then (i) implies (ii).
Further, if $G$ is $\sigma$-compact then (i) and (ii) are equivalent.
\end{proposition}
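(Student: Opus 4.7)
The plan is to establish a Schoenberg-type correspondence: exponentiation converts a locally proper negative type function into a family of positive definite, locally $C_0$ functions, and conversely a weighted sum of the functions $1 - \operatorname{Re}(k_n)$ produces a locally proper negative type function once $\sigma$-compactness is available to synchronize two competing rates.

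For (i)$\Rightarrow$(ii), let $\psi$ be a non-zero locally proper negative type function. One first checks $\psi \geq 0$ by applying the conditionally negative definite condition at $x = s(\gamma)$ with $F = \{s(\gamma), \gamma\}$ and $\lambda = (1,-1)$, giving $-2\psi(\gamma) \leq 0$. Set $k_n(\gamma) = \exp(-\psi(\gamma)/n)$. At each $x \in \GG$ and finite $F \subseteq G_x$, the scalar matrix $[\psi(\gamma_i \gamma_j^{-1})]$ is conditionally negative definite, so Schoenberg's theorem (\cite[Chapter~3, Theorem~2.2]{BerChrRes1984}) shows $[k_n(\gamma_i\gamma_j^{-1})]$ is positive semidefinite; hence $k_n$ is positive definite. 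Normalization is immediate from $\psi|_{\GG} = 0$. For local $C_0$-ness, given compact $K \subseteq \GG$ and $\varepsilon > 0$, the set $\{\gamma \in G_K^K : k_n(\gamma) \geq \varepsilon\}$ equals $(\psi,r,s)^{-1}([0, -n\ln\varepsilon] \times K \times K)$, compact by local properness. Continuity of $\psi$ makes it bounded on any compact subset of $G$, so $k_n \to 1$ uniformly on compacts.

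For (ii)$\Rightarrow$(i) under $\sigma$-compactness, let $(k_n)$ witness the sequential Haagerup property and set $\psi_n = 1 - \operatorname{Re}(k_n)$. Each $\psi_n$ is continuous, normalized, symmetric, and satisfies $0 \leq \psi_n \leq 2$ (using $|k_n| \leq 1$, itself coming from positivity of the $2\times 2$ matrix at $\{s(\gamma), \gamma\}$). It is conditionally negative definite via
\begin{align*}
\sum_{i,j} \lambda_i \lambda_j \psi_n(\gamma_i \gamma_j^{-1}) = \Bigl(\sum_i \lambda_i\Bigr)^2 - \operatorname{Re} \sum_{i,j} \lambda_i \lambda_j k_n(\gamma_i \gamma_j^{-1}) \leq 0
\end{align*}
whenever $\sum_i \lambda_i = 0$. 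The locally $C_0$ property of $k_n$ translates into $\psi_n \geq 1/2$ outside a compact subset of each $G_L^L$. Writing $G = \bigcup_j K_j$ with compact $K_j \subseteq K_{j+1}$, use uniform convergence on compacts to extract a subsequence (relabeled $\psi_j$) with $\sup_{K_j} \psi_j < 4^{-j}$, and set $\psi = \sum_{j=1}^{\infty} 2^j \psi_j$. On each $K_m$, the tail $\sum_{j \geq m} 2^j \psi_j$ is dominated by $\sum 2^{-j}$, so $\psi$ is continuous and inherits the normalized, symmetric, nonnegative, and conditionally negative definite properties from its partial sums. For local properness, given compact $L \subseteq \GG$ and $R>0$, pick $j$ with $2^{j-1} > R$; then $\psi \geq 2^j \psi_j > R$ outside a compact subset of $G_L^L$, yielding the required compactness of $(\psi,r,s)^{-1}([0,R] \times L \times L)$. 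Non-vanishing of $\psi$ holds provided not every $k_n$ equals $1$, a degeneracy excluded outside the trivial case $G = \GG$.

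The main obstacle lies in the converse: one must simultaneously control the decay of $\psi_n$ on compact pieces of $G$ (for convergence of the weighted sum and continuity of $\psi$) and the lower bound $\psi_n \geq 1/2$ at infinity of each $G_L^L$ (for local properness), using unbounded weights. The geometric choice $c_j = 2^j$ combined with a $4^{-j}$-decay subsequence reconciles these, but the extraction of the subsequence genuinely requires the countable exhaustion supplied by $\sigma$-compactness; without it, the net version of the Haagerup property admits no natural summation structure and this construction breaks down.
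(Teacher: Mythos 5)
The paper itself offers no proof of this proposition: it is quoted (with the added qualifier ``non-zero'') from \cite[Proposition~5.4]{MR4404070}, so there is no internal argument to compare against. Your proof is the standard Schoenberg-type correspondence and, as far as I can tell, reconstructs the argument of the cited source. Both directions are sound in substance: the computation $-2\psi(\gamma)\le 0$ giving $\psi\ge 0$, the passage $\psi\mapsto e^{-\psi/n}$ via \cite[Chapter~3, Theorem~2.2]{BerChrRes1984}, the identification of $\{\gamma\in G_K^K: k_n(\gamma)\ge\varepsilon\}$ with a $(\psi,r,s)$-preimage of a compact set, and, conversely, the weighted series $\sum_j 2^j\psi_{n_j}$ built from a subsequence with $4^{-j}$-decay on a compact exhaustion, played off against the lower bound $\psi_{n_j}>1/2$ at infinity of each $G_L^L$.

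Two loose ends should be tightened. First, uniform convergence of $\sum_j 2^j\psi_j$ on each $K_m$ gives continuity of $\psi$ on $G$ only if every point has a neighbourhood contained in some $K_m$; choose the exhaustion with $K_j\subseteq\operatorname{int}(K_{j+1})$ (possible since $G$ is locally compact, Hausdorff and $\sigma$-compact) and say so. Second, your claim that the degeneracy ``every $k_n\equiv 1$'' is excluded outside the case $G=\GG$ is not accurate: the constant sequence $k_n\equiv 1$ witnesses the sequential Haagerup property for any groupoid in which each $G_L^L$ is compact (for instance any compact groupoid), and then your construction returns $\psi\equiv 0$. The statement survives, because in that situation $\psi = 1 - 1_{\GG}$ is a non-zero locally proper negative type function whenever $G\neq\GG$ (it is continuous since $\GG$ is clopen, conditionally negative definite since $\sum_{i,j}\lambda_i\lambda_j 1_{\GG}(\gamma_i\gamma_j^{-1})=\sum_{\gamma}(\sum_{i:\gamma_i=\gamma}\lambda_i)^2\ge 0$, and locally proper since $G_L^L$ is compact); but this case must be treated separately rather than waved away. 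Note also that for $G=\GG$ property (ii) holds trivially while (i) fails, so the equivalence as stated requires $G\neq\GG$; that is a defect of the transcription of the proposition (stemming from the inserted word ``non-zero''), not of your argument, but it is exactly the point your parenthetical glosses over.
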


\begin{corollary}
Let $G$ be an \'{e}tale locally compact second-countable Hausdorff groupoid.
Then $G$ admits a non-zero locally proper negative type function if and only if $G$ has the sequential Haagerup property.
\end{corollary}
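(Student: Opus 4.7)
The proof plan is almost trivial: the Corollary is an immediate specialization of Proposition~\ref{prop: HAP equivalence}, whose full strength (the equivalence of (i) and (ii), not just the one-way implication) requires $\sigma$-compactness. Thus the only thing to verify is that a second-countable locally compact Hausdorff groupoid is $\sigma$-compact, which is a standard point-set fact.

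First I would recall from Proposition~\ref{prop: HAP equivalence} that the implication (i) $\Rightarrow$ (ii) holds with no extra hypothesis, so that direction is automatic. Second, I would argue that $G$ is $\sigma$-compact: since $G$ is locally compact Hausdorff, every point has an open neighborhood with compact closure, and since $G$ has a countable basis for its topology, one can extract a countable cover of $G$ by open sets $U_n$ with $\overline{U_n}$ compact. Then $G = \bigcup_n \overline{U_n}$ exhibits $G$ as a countable union of compact sets. Third, I would invoke the $\sigma$-compact case of Proposition~\ref{prop: HAP equivalence} to conclude that (ii) $\Rightarrow$ (i) as well.

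There is no real obstacle here; the corollary's content is just the translation of the hypothesis ``second-countable'' into the hypothesis ``$\sigma$-compact'' needed by the proposition. The one stylistic choice to make is whether to spell out the $\sigma$-compactness argument or simply cite it as a standard fact (e.g.\ from a general topology reference). Given that the paper is aimed at operator algebraists who use this fact routinely, a one-sentence justification should suffice.
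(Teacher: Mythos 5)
Your proposal is correct and matches the paper's (implicit) argument exactly: the corollary is stated as an immediate consequence of Proposition~\ref{prop: HAP equivalence}, with the only content being that second-countable locally compact Hausdorff spaces are $\sigma$-compact. Your spelled-out justification of that point is fine.
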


We will show in Theorem~\ref{thm: neg to length} that a locally proper negative type function on a Hausdorff \'{e}tale groupoid $G$ induces a length function on the groupoid $G$.
When $G$ is a discrete group this is \cite[Proposition~2.25]{BedCon2009}.
As is done in \cite{BedCon2009}, we adapt arguments from \cite{BerChrRes1984}.
We include proofs here for completeness.
In the case when $G$ is a discrete group, the following lemma follows from \cite[Chapter~3, Lemma~2.1]{BerChrRes1984}.
The proof of  Theorem~\ref{thm: neg to length} adapts the proof of \cite[Chapter~4, Proposition~3.3]{BerChrRes1984} to the \'etale groupoid setting.

\begin{lemma} \label{ndl}
    Let $G$ be an \'etale Hausdorff groupoid.
    Fix any $x \in \GG$.
    Let $\psi\colon G \rightarrow \mathbb{R}$ be a locally proper negative type function.
    Fix any $x \in G^{(0)}$.
    Then, for any $\gamma_1,\ldots,\gamma_n \in G_x$,
    $[\psi(\gamma_i)+\psi(\gamma_j)-\psi(\gamma_i\gamma_j^{-1})]_{1\leq i, j \leq n}$
    is a positive-definite matrix.
\end{lemma}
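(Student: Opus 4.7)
The plan is to use the standard trick of introducing the unit $x$ as an auxiliary ``zeroth'' element in order to convert the conditional negative-definiteness of $\psi$ into the claimed positive-definiteness.

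Fix $\gamma_1,\ldots,\gamma_n \in G_x$ and take real scalars $\lambda_1,\ldots,\lambda_n \in \mathbb{R}$. Set $\gamma_0 := x \in \GG$ and $\lambda_0 := -\sum_{i=1}^n \lambda_i$. Then $\gamma_0, \gamma_1, \ldots, \gamma_n \in G_x$ and $\sum_{i=0}^n \lambda_i = 0$, so conditional negative definiteness of $\psi$ (as defined in the excerpt) gives
\begin{align*}
\sum_{i,j=0}^n \lambda_i \lambda_j \psi(\gamma_i \gamma_j^{-1}) \leq 0.
\end{align*}

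Next I would expand this double sum by separating the $i=0$ and $j=0$ terms, using that $\gamma_0 = x$ is a unit so $\gamma_0 \gamma_j^{-1} = \gamma_j^{-1}$ and $\gamma_i \gamma_0^{-1} = \gamma_i$, that $\psi$ is normalized so $\psi(\gamma_0 \gamma_0^{-1}) = \psi(x) = 0$, and that $\psi$ is symmetric so $\psi(\gamma_j^{-1}) = \psi(\gamma_j)$. This yields
\begin{align*}
2\lambda_0 \sum_{i=1}^n \lambda_i \psi(\gamma_i) + \sum_{i,j=1}^n \lambda_i \lambda_j \psi(\gamma_i \gamma_j^{-1}) \leq 0.
\end{align*}
Substituting $\lambda_0 = -\sum_k \lambda_k$ into the first term and symmetrizing converts $2\lambda_0 \sum_i \lambda_i \psi(\gamma_i)$ into $-\sum_{i,j}\lambda_i\lambda_j(\psi(\gamma_i)+\psi(\gamma_j))$, and rearranging produces
\begin{align*}
\sum_{i,j=1}^n \lambda_i \lambda_j \bigl[\psi(\gamma_i) + \psi(\gamma_j) - \psi(\gamma_i \gamma_j^{-1})\bigr] \geq 0,
\end{align*}
which is the required inequality for real scalars.

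Finally, since $\psi$ is real-valued and symmetric, the matrix $M = [\psi(\gamma_i)+\psi(\gamma_j)-\psi(\gamma_i\gamma_j^{-1})]_{i,j}$ is real and symmetric (note $\psi(\gamma_i\gamma_j^{-1}) = \psi((\gamma_j\gamma_i^{-1})^{-1})^{} = \psi(\gamma_j\gamma_i^{-1})$), so positive semi-definiteness over $\mathbb{R}$ upgrades to positive semi-definiteness over $\mathbb{C}$ in the usual way. There is no real obstacle here; the only subtlety is checking that $\gamma_0\gamma_j^{-1}$ and $\gamma_i\gamma_0^{-1}$ are composable in $G$, which is automatic because all of $\gamma_1,\ldots,\gamma_n$ and $\gamma_0=x$ share source $x$.
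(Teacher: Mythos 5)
Your proposal is correct and follows essentially the same route as the paper's proof: adjoin the unit $x$ as $\gamma_0$ with $\lambda_0=-\sum_i\lambda_i$, apply conditional negative definiteness to the augmented family, and expand using that $\psi$ is normalized and symmetric. The only addition is your closing remark on passing from real to complex positive semi-definiteness, which the paper leaves implicit.
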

\begin{proof}
Fix $\gamma_1,\ldots,\gamma_n \in G_x$, and let $\gamma_0 =x$.
    Take any $\lambda_1, \ldots, \lambda_n \in \bR$,
    and set $\lambda_0=-\sum_{j=1}^{n}\lambda_j$. 
    Thus $\sum_{i=0}^{n} \lambda_i=0$.
    Applying condition (iii) of Defintion~\ref{ndcf} to $\gamma_0, \gamma_1, \ldots, \gamma_n$, we have that $\sum_{j,k=0}^{n}\lambda_j \lambda_k \psi(\gamma_j\gamma_k^{-1}) \leq 0$.

    It follows that,
    \begin{align*}
        0 &\geq \sum_{j,k=0}^{n}\lambda_j \lambda_k \psi(\gamma_j\gamma_k^{-1}) \\
        &=\sum_{j,k=1}^{n}\lambda_j\lambda_k \psi(\gamma_j\gamma_k^{-1})+\sum_{j=1}^{n}\lambda_j \lambda_0 \psi(\gamma_j\gamma_0^{-1})+ \sum_{k=1}^{n} \lambda_0 \lambda_k \psi(x\gamma_k^{-1})+ \lambda_0^{2} \psi(x)\\
        &=\sum_{j,k=1}^{n}\lambda_j\lambda_k \psi(\gamma_j\gamma_k^{-1})+\sum_{j=1}^{n}\lambda_j \lambda_0 \psi(\gamma_jx)+ \sum_{k=1}^{n} \lambda_0 \lambda_k \psi(x\gamma_k^{-1})\\
        &= - \sum_{j, k=1}^{n} \lambda_j \lambda_k (\psi(\gamma_jx)+\psi(\gamma_kx)-\psi(\gamma_j\gamma_k^{-1})) \qquad (\text{since }\lambda_0 = - \sum_{k=1}^n\lambda_k)\\
        &= - \sum_{j, k=1}^{n} \lambda_j \lambda_k (\psi(\gamma_j)+\psi(\gamma_k)-\psi(\gamma_j\gamma_k^{-1})),
    \end{align*}
    which proves the result.
\end{proof}

\begin{theorem}\label{thm: neg to length}
    If $\psi\colon G \rightarrow [0, \infty)$ is a locally proper negative type function on a Hausdorff \'{e}tale groupoid $G$, then $L:=\sqrt{\psi}$ is a length function on the groupoid $G$. 
\end{theorem}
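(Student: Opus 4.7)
The plan is to verify the three axioms of Definition~\ref{lnfn} for $L := \sqrt{\psi}$, which is well-defined since $\psi$ takes values in $[0,\infty)$ by hypothesis. Axioms (i) and (ii) are immediate from the corresponding properties of $\psi$ in Definition~\ref{ndcf}: normalization $\psi|_{G^{(0)}} = 0$ gives $L|_{G^{(0)}} = 0$, and symmetry $\psi(\gamma^{-1}) = \psi(\gamma)$ gives $L(\gamma^{-1}) = L(\gamma)$. So the real content is subadditivity.

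For subadditivity I would imitate the classical Hilbert-space argument of \cite[Chapter~3]{BerChrRes1984}, but encoded entirely through the $2 \times 2$ case of Lemma~\ref{ndl}. Fix a composable pair $(\gamma_1, \gamma_2) \in G^{(2)}$, set $x := s(\gamma_1) = r(\gamma_2)$, and note that both $\gamma_1$ and $\gamma_2^{-1}$ lie in $G_x$. Applying Lemma~\ref{ndl} to the pair $(\gamma_1, \gamma_2^{-1})$, using normalization on the diagonal (since $\gamma_i \gamma_i^{-1} \in G^{(0)}$) and symmetry on the off-diagonal (since $\psi(\gamma_2^{-1}) = \psi(\gamma_2)$ and $\psi((\gamma_1\gamma_2)^{-1}) = \psi(\gamma_1\gamma_2)$), yields the positive semidefinite matrix
\[
M = \begin{pmatrix} 2\psi(\gamma_1) & \psi(\gamma_1) + \psi(\gamma_2) - \psi(\gamma_1\gamma_2) \\ \psi(\gamma_1) + \psi(\gamma_2) - \psi(\gamma_1\gamma_2) & 2\psi(\gamma_2) \end{pmatrix}.
\]
Non-negativity of $\det M$ then gives $(\psi(\gamma_1) + \psi(\gamma_2) - \psi(\gamma_1\gamma_2))^2 \leq 4\psi(\gamma_1)\psi(\gamma_2)$, and a short rearrangement yields
\[
\psi(\gamma_1\gamma_2) \leq \psi(\gamma_1) + \psi(\gamma_2) + 2\sqrt{\psi(\gamma_1)\psi(\gamma_2)} = \bigl(\sqrt{\psi(\gamma_1)} + \sqrt{\psi(\gamma_2)}\bigr)^2,
\]
which is precisely $L(\gamma_1\gamma_2) \leq L(\gamma_1) + L(\gamma_2)$.

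I do not anticipate any substantive obstacle. The one mild subtlety is that taking square roots of the determinant bound only controls the absolute value $|\psi(\gamma_1) + \psi(\gamma_2) - \psi(\gamma_1\gamma_2)|$, so strictly speaking one splits on its sign: when $\psi(\gamma_1) + \psi(\gamma_2) - \psi(\gamma_1\gamma_2) \geq 0$ the subadditivity of $L$ follows immediately from $\psi(\gamma_1\gamma_2) \leq \psi(\gamma_1) + \psi(\gamma_2) \leq (\sqrt{\psi(\gamma_1)} + \sqrt{\psi(\gamma_2)})^2$, while in the opposite case the determinant inequality supplies exactly the bound displayed above. This bookkeeping is the only place one might stumble, and it is routine.
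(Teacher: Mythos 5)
Your proposal is correct and follows essentially the same route as the paper: both apply the $2\times 2$ case of Lemma~\ref{ndl} to the pair $(\gamma_1,\gamma_2^{-1})\in G_x\times G_x$, use nonnegativity of the determinant to bound $(\psi(\gamma_1)+\psi(\gamma_2)-\psi(\gamma_1\gamma_2))^2$ by $4\psi(\gamma_1)\psi(\gamma_2)$, and rearrange to get $\psi(\gamma_1\gamma_2)\le(\sqrt{\psi(\gamma_1)}+\sqrt{\psi(\gamma_2)})^2$. The only cosmetic difference is that you handle the sign of $\psi(\gamma_1)+\psi(\gamma_2)-\psi(\gamma_1\gamma_2)$ by a case split, whereas the paper absorbs it via the triangle inequality; both are fine.
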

\begin{proof} 
Take any $x\in \GG$ and $\gamma, \eta \in G_x$.
    The matrix
    $$
    A_x:=\begin{bmatrix}
2 \psi(\gamma) & \psi(\gamma)+\psi(\eta)-\psi(\gamma \eta^{-1})\\
\psi(\gamma)+\psi(\eta)-\psi(\gamma \eta^{-1}) & 2 \psi(\eta) & 
\end{bmatrix}
$$
is positive-definite by Lemma~\ref{ndl}. 
Thus $\det (A_x) \geq 0$ and hence
\begin{align*}
    [\psi(\gamma)+\psi(\eta)-\psi(\gamma\eta^{-1})]^{2} \leq 4 |\psi(\gamma)||\psi(\eta)|.
\end{align*}
Taking the  square root of the above inequality, we get
\begin{align*}
    [\psi(\gamma)+\psi(\eta)-\psi(\gamma \eta^{-1})] \leq 2 \sqrt{|\psi(\gamma)|}\sqrt{|\psi(\eta)|},
\end{align*}
and so
\begin{align*}
  |\psi(\gamma \eta^{-1})|&=|(\psi(\gamma \eta^{-1})-\psi(\gamma)-\psi(\eta)+ \psi(\gamma)+\psi(\eta)|\\
  &\leq |\psi(\gamma)+\psi(\eta)|+ |(\psi(\gamma \eta^{-1})-\psi(\gamma)-\psi(\eta))|\\
  & \leq |\psi(\gamma)|+|\psi(\eta)| +2\sqrt{|\psi(\gamma)|}\sqrt{|\psi(\eta)|} \\
  &=(\sqrt{|\psi(\gamma)|} + \sqrt{|\psi(\eta)|})^{2}\,.
\end{align*}
Consequently, by taking square root on the both sides of the above inequality and using symmetric property of $\psi$, one gets
\begin{align*}
  \sqrt{\psi(\gamma \eta^{-1})} &\leq \sqrt{\psi(\gamma)} + \sqrt{\psi(\eta)}
    = \sqrt{\psi(\gamma)} + \sqrt{\psi(\eta^{-1})}.
\end{align*}
Hence, $L = \sqrt{\psi}$ is a sub-additive function.
As $\psi$ is normalized and symmetric, $L$ is normalized and symmetric.
Consequently, $L$ is a length function on $G$.
\end{proof}

When $G$ is a group with a locally proper negative type function $\psi$, the length function $L= \sqrt{\psi}$ is called a \emph{Haagerup length function} in \cite{BedCon2009}.
The following is a groupoid analogue of \cite[Theorem~5.9]{BedCon2009}.

\begin{theorem}\label{thm: main}
    Let $G$ be locally compact Hausdorff \'{e}tale groupoid, and let $\Sigma \rightarrow G$ be a twist.
    Suppose there is a non-zero locally proper negative type function $\psi$ on $G$, and that $G$ has the rapid decay property with respect to the length function $L = \sqrt{\psi}.$
    Take any $f \in C_r^*(\Sigma; G)$ and let $U$ be the open support of $f$.
    Then $f \in \overline{C_c(\Sigma|_U;U)}^{\|\cdot\|_r}$.

    In particular, the conclusion holds if $G$ is second-countable and satisfies the Haagerup property and has the rapid decay property with respect to the induced length function.
\end{theorem}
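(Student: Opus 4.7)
My plan is to adapt the proof of Theorem~\ref{thm: negative RDP}, replacing the multipliers $k_n(\gamma) = e^{-L(\gamma)/n}$ used there (for which conditional negative definiteness of $L$ itself was essential) with
\[ k_n(\gamma) = e^{-\psi(\gamma)/n} = e^{-L(\gamma)^2/n}, \qquad \gamma \in G. \]
By Schoenberg's theorem \cite[Chapter~3, Theorem~2.2]{BerChrRes1984} applied to the conditionally negative definite function $\psi$, each $k_n$ is a continuous positive-definite function on $G$. I would also record that $\psi \geq 0$: applying condition~(iii) of Definition~\ref{ndcf} to a pair $\gamma_1,\gamma_2 \in G_x$ with $\lambda_1 = 1,\lambda_2 = -1$ gives $-2\psi(\gamma_1\gamma_2^{-1}) \leq 0$. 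In particular $\|k_n\|_\infty \leq 1$, and $L = \sqrt{\psi}$ is genuinely real-valued.

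Next I would check the remaining hypotheses of Theorem~\ref{thm: mult approx} for $(k_n)$ and replicate the moment estimate that powered the proof of Theorem~\ref{thm: negative RDP}. The sequence is uniformly bounded by $1$, and since $\psi$ is continuous (hence bounded on every compact subset of $G$), $k_n \to 1$ uniformly on compact subsets. For each fixed $p \in \bZ_+$,
\[ \sup_{\gamma \in G} |k_n(\gamma)|(1+L(\gamma))^p \; \leq \; \sup_{x \geq 0} e^{-x^2/n}(1+x)^p \; < \; \infty, \]
because $x \mapsto e^{-x^2/n}(1+x)^p$ is continuous on $[0,\infty)$ and vanishes at infinity. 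From here the remainder of the argument is verbatim that of Theorem~\ref{thm: negative RDP}: approximating $f \in C_r^*(\Sigma;G)$ in the reduced norm by sections in $C_c(\Sigma;G)$ and combining $\|M_{k_n}\| \leq 1$ with the above moment bound places $M_{k_n}(f) \in H^{2,L}(\Sigma;G) \subseteq C_r^*(\Sigma;G)$; since $k_n$ is strictly positive on $G$, $\supp(M_{k_n}(f)) = \supp(f) = U$; Theorem~\ref{thm: rdp supp} then yields $M_{k_n}(f) \in \overline{C_c(\Sigma|_U;U)}^{\|\cdot\|_r}$; and Theorem~\ref{thm: mult approx} gives $M_{k_n}(f) \to f$ in $\|\cdot\|_r$, so $f$ also lies in this closed set.

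For the ``in particular'' statement, since $G$ is second-countable (hence $\sigma$-compact), a standard extraction of a sequence from a net witnessing Definition~\ref{HAP} upgrades the Haagerup property to the sequential Haagerup property, and then Proposition~\ref{prop: HAP equivalence} produces a non-zero locally proper negative type function $\psi$ on $G$. Theorem~\ref{thm: neg to length} turns this $\psi$ into the length function $L = \sqrt{\psi}$ to which the first part of the theorem applies. I do not foresee a serious technical obstacle: the only genuinely new ingredient beyond the proof of Theorem~\ref{thm: negative RDP} is the realisation that, even when $L$ itself fails to be conditionally negative definite, its square $\psi$ is, and Schoenberg's theorem lets us use $e^{-\psi/n}$ as the approximating positive-definite multipliers while still keeping the polynomial-weighted $\ell^2$ bounds finite.
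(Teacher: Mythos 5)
Your proposal is correct and follows essentially the same route as the paper: the authors also run the argument of Theorem~\ref{thm: negative RDP} with the multipliers $k_n = e^{-\psi/n}$, invoking Schoenberg's theorem for the conditionally negative definite function $\psi$ and the bound $\sup_{x\ge 0} e^{-x/n}(1+\sqrt{x})^p < \infty$ for the weighted $\ell^2$ estimates, and they obtain the ``in particular'' clause from Proposition~\ref{prop: HAP equivalence} and Theorem~\ref{thm: neg to length} exactly as you do. Your explicit verification that $\psi \geq 0$ (via $\lambda_1 = 1$, $\lambda_2 = -1$) is a worthwhile detail that the paper leaves implicit.
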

\begin{proof}
    The proof is similar to the proof of Theorem~\ref{thm: negative RDP}.
    We outline the details.
    Define a sequence of functions $k_n(\gamma)=e^{-\frac{\psi(\gamma)}{n}} \in C(G)$ for all $n \in \bN$. 
    We again have that the sequence $(k_n)_n$ are positive-definite, bounded by $1$ in $\sup$-norm, and converge uniformly to $1$ on compact subsets of $G$.
    Further, as $e^{-x}(1+\sqrt{x})^{s} \rightarrow 0$ as $x \rightarrow \infty$ for any positive $s \in \mathbb{R}$, we have $|k_n(\gamma)|(1+L(\gamma))^{p}$
    is bounded for each $p$, where $L = \sqrt{\psi}$.
    The rest of the proof now follows the same argument as that of Theorem~\ref{thm: negative RDP}.
\end{proof}


\begin{thebibliography}{10}

\bibitem{anantharamandelaroche2021exactgroupoids}
Claire Anantharaman-Delaroche.
\newblock Exact groupoids, 2021.

\bibitem{AusOrt2022}
Are Austad and Eduard Ortega.
\newblock {$C^*$}-uniqueness results for groupoids.
\newblock {\em Int. Math. Res. Not. IMRN}, (4):3057--3073, 2022.

\bibitem{BedCon2009}
Erik B\'edos and Roberto Conti.
\newblock On twisted {F}ourier analysis and convergence of {F}ourier series on discrete groups.
\newblock {\em J. Fourier Anal. Appl.}, 15(3):336--365, 2009.

\bibitem{BedCon2015}
Erik B\'{e}dos and Roberto Conti.
\newblock Fourier series and twisted {${\rm C}^{\ast}$}-crossed products.
\newblock {\em J. Fourier Anal. Appl.}, 21(1):32--75, 2015.

\bibitem{BerChrRes1984}
Christian Berg, Jens Peter~Reus Christensen, and Paul Ressel.
\newblock {\em Harmonic analysis on semigroups}, volume 100 of {\em Graduate Texts in Mathematics}.
\newblock Springer-Verlag, New York, 1984.
\newblock Theory of positive definite and related functions.

\bibitem{BonLi2020}
Christian B\"onicke and Kang Li.
\newblock Ideal structure and pure infiniteness of ample groupoid {$C^*$}-algebras.
\newblock {\em Ergodic Theory Dynam. Systems}, 40(1):34--63, 2020.

\bibitem{BroNib2004}
Jacek Brodzki and Graham~A. Niblo.
\newblock Approximation properties for discrete groups.
\newblock In {\em {$C^\ast$}-algebras and elliptic theory}, Trends Math., pages 23--35. Birkh\"auser, Basel, 2006.

\bibitem{BroExeFulPitRez2021}
Jonathan~H. Brown, Ruy Exel, Adam~H. Fuller, David~R. Pitts, and Sarah~A. Reznikoff.
\newblock Intermediate {$C^*$}-algebras of {C}artan embeddings.
\newblock {\em Proc. Amer. Math. Soc. Ser. B}, 8:27--41, 2021.

\bibitem{brown2024corrigendum}
Jonathan~H Brown, Ruy Exel, Adam~H Fuller, David~R Pitts, and Sarah~A Reznikoff.
\newblock Corrigendum to ``{I}ntermediate {C$^*$}-algebras of {C}artan embeddings''.
\newblock {\em Proceedings of the American Mathematical Society, Series B}, 11, 2024.

\bibitem{BrFuPiRe2021}
Jonathan~H. Brown, Adam~H. Fuller, David~R. Pitts, and Sarah~A. Reznikof.
\newblock Graded {$C^\ast$}-algebras and twisted groupoid {$C^\ast$}-algebras.
\newblock {\em New York J. Math.}, 27:205--252, 2021.

\bibitem{BroOza2008}
Nathanial~P. Brown and Narutaka Ozawa.
\newblock {\em {$C^*$}-algebras and finite-dimensional approximations}, volume~88 of {\em Graduate Studies in Mathematics}.
\newblock American Mathematical Society, Providence, RI, 2008.

\bibitem{CraNeu2022}
Jason Crann and Matthias Neufang.
\newblock A non-commutative {F}ej\'er theorem for crossed products, the approximation property, and applications.
\newblock {\em Int. Math. Res. Not. IMRN}, (5):3571--3601, 2022.

\bibitem{DGNRW}
A.~Duwenig, E.~Gillaspy, R.~Norton, S.~Reznikoff, and S.~Wright.
\newblock Cartan subalgebras for non-principal twisted groupoid {$C^*$}-algebras.
\newblock {\em J. Funct. Anal.}, 279(6):108611, 40, 2020.

\bibitem{HLS}
N.~Higson, V.~Lafforgue, and G.~Skandalis.
\newblock Counterexamples to the {B}aum-{C}onnes conjecture.
\newblock {\em Geom. Funct. Anal.}, 12(2):330--354, 2002.

\bibitem{Hou2017}
Cheng~Jun Hou.
\newblock Spectral invariant subalgebras of reduced groupoid {$C^*$}-algebras.
\newblock {\em Acta Math. Sin. (Engl. Ser.)}, 33(4):526--544, 2017.

\bibitem{MR4404070}
Bartosz~K. Kwa\'{s}niewski, Kang Li, and Adam Skalski.
\newblock The {H}aagerup property for twisted groupoid dynamical systems.
\newblock {\em J. Funct. Anal.}, 283(1):Paper No. 109484, 43, 2022.

\bibitem{NibRee2003}
G.~A. Niblo and L.~D. Reeves.
\newblock Coxeter groups act on {${\rm CAT}(0)$} cube complexes.
\newblock {\em J. Group Theory}, 6(3):399--413, 2003.

\bibitem{RenBook}
Jean Renault.
\newblock {\em A groupoid approach to {$C\sp{\ast} $}-algebras}, volume 793 of {\em Lecture Notes in Mathematics}.
\newblock Springer, Berlin, 1980.

\bibitem{SiGaWi2020}
Aidan Sims, G\'abor Szab\'o, and Dana Williams.
\newblock {\em Operator algebras and dynamics: groupoids, crossed products, and {R}okhlin dimension}.
\newblock Advanced Courses in Mathematics. CRM Barcelona. Birkh\"auser/Springer, Cham, [2020] \copyright2020.
\newblock Lecture notes from the Advanced Course held at Centre de Recerca Matem\`atica (CRM) Barcelona, March 13--17, 2017.

\bibitem{Suzuki2017}
Yuhei Suzuki.
\newblock Group {$\rm C^*$}-algebras as decreasing intersection of nuclear {$\rm C^*$}-algebras.
\newblock {\em Amer. J. Math.}, 139(3):681--705, 2017.

\bibitem{Tak}
Takuya Takeishi.
\newblock On nuclearity of {$C^*$}-algebras of {F}ell bundles over \'{e}tale groupoids.
\newblock {\em Publ. Res. Inst. Math. Sci.}, 50(2):251--268, 2014.

\bibitem{weygandt2023rapid}
Alex Weygandt.
\newblock Rapid decay for principal \'etale groupoids.
\newblock {\em New York J. Math.}, 30:956--978, 2024.

\bibitem{Wil2015}
Rufus Willett.
\newblock A non-amenable groupoid whose maximal and reduced {$C^*$}-algebras are the same.
\newblock {\em M\"{u}nster J. Math.}, 8(1):241--252, 2015.

\bibitem{Zeller-Meier}
G.~Zeller-Meier.
\newblock Produits crois\'es d'une {$C\sp{\ast} $}-alg\`ebre par un groupe d'automorphismes.
\newblock {\em J. Math. Pures Appl. (9)}, 47:101--239, 1968.

\end{thebibliography}

\end{document}